\definecolor{Darkgreen}{rgb}{0,0.4,0}
\newtheorem{theorem}{Theorem}[section]
\newtheorem{lemma}[theorem]{Lemma}
\newtheorem{proposition}[theorem]{Proposition}
\theoremstyle{definition}
\newtheorem{assumptions}[theorem]{Assumption}
\theoremstyle{remark}
\newtheorem{remark}[theorem]{Remark}
\numberwithin{equation}{section}
\setlist[enumerate]{label=(\alph*)}
\DeclareMathOperator{\Cov}{Cov}
\DeclareMathOperator{\Var}{Var}
\DeclareMathOperator{\spn}{span}
\DeclareMathOperator{\children}{desc}
\DeclareMathOperator{\anc}{anc}
\DeclarePairedDelimiter\abs{\lvert}{\rvert}
\DeclarePairedDelimiter\norm{\lVert}{\rVert}
\newcommand{\refoverseteq}[1]{\overset{\mathclap{\eqref{#1}}}=\,}
\newcommand{\refoversetgeq}[1]{\overset{\mathclap{\eqref{#1}}}\ge\,}
\newcommand{\refoversetleq}[1]{\overset{\mathclap{\eqref{#1}}}\le\,}
\newcommand{\D}{\mathrm{d}}
\newcommand{\Max}{\mathrm{max}}
\newcommand{\Sec}{\mathrm{sec}}
\newcommand{\bbone}{\boldsymbol 1}
\newcommand{\Id}{\mathrm{Id}}
\newcommand{\finitegraph}{{\mathcal{G}_n}}
\newcommand{\vfinitegraph}{{\mathcal{V}_n}}
\newcommand{\efinitegraph}{{\mathcal{E}_n}}
\newcommand{\tfinitegraph}{{\tilde{\mathcal{G}}_n}}
\newcommand{\tvfinitegraph}{{\tilde{\mathcal{V}}_n}}
\newcommand{\bfinitegraph}{{\bar{\mathcal{G}}_n}}
\newcommand{\bvfinitegraph}{{\bar{\mathcal{V}}_n}}
\newcommand{\hfinitegraph}{{\hat{\mathcal{G}}_n}}
\newcommand{\hvfinitegraph}{{\hat{\mathcal{V}}_n}}
\newcommand{\treegraph}{{\mathbb T_d}}
\newcommand{\ttreegraph}{{\tilde{\mathbb T}_d}}
\newcommand{\rot}{{\textup{o}}}
\newcommand{\ack}{\subsection*{Acknowledgment}}
\begin{document}

\title[Giant component for the level-set percolation of the GFF on expanders]
{Giant component for the supercritical level-set percolation of the Gaussian
  free field on regular expander graphs}

\author[J. Černý]{Jiří Černý}
\address{Jiří Černý, Department of Mathematics and Computer Science,
  University of Basel, Spiegelgasse~1, 4051 Basel, Switzerland}
\email{jiri.cerny@unibas.ch}

\date{September 30, 2021}

\begin{abstract}
  We consider the zero-average Gaussian free field on a certain class of
  finite $d$-regular graphs for fixed $d\ge 3$. This class includes $d$-regular
  expanders of large girth and typical realisations of random $d$-regular
  graphs. We show that the level set of the zero-average Gaussian free
  field above level $h$ has a giant component in the whole supercritical
  phase, that is for all $h<h_\star$, with probability tending to one as
  the size of the graphs tends to infinity. In addition, we show that
  this component is unique. This significantly improves the result of
  \cite{AC20b}, where it was shown that a linear fraction of vertices is
  in mesoscopic components if $h<h_\star$,  and together with the
  description of the subcritical phase from \cite{AC20b} establishes a
  fully-fledged percolation phase transition for the model.
\end{abstract}

\keywords{Gaussian free field; level-set percolation; giant component;
  expander graphs}

\maketitle

\section{Introduction} 
\label{sec:intro}

Level-set percolation of the Gaussian free field is a significant
representative of percolation models with long range correlations. It has
attracted attention for a long time, dating back to
\cite{MS83,LS86,BLM87}. In the last decade, it has been subject to
intensive research after a non-trivial percolation phase transition has been identified
for this model on $\mathbb Z^d$ in
\cite{RS13}, see for instance \cite{AS18,CN20,DPR18,Szn19a}. Only very
recently, in the remarkable paper \cite{DGRS20}, it has been shown that
this phase transition is sharp, and, rather amazingly,  the critical
exponents have been identified for a related model of the level-set
percolation on the cable system of $\mathbb Z^d$ in \cite{DPR21}.

In coherence with a long line of past percolation results, it is
natural to consider the level-set percolation of the Gaussian free field on
finite graphs as well. In this context, \cite{Aba19} introduced
a suitable version of the Gaussian free field which can be defined on
finite graphs, the \emph{zero-average} Gaussian free field, and studied
its properties on discrete tori of
growing side length in dimension $d\ge 3$.
In \cite{AC20b} (with preparatory steps conducted in \cite{AC20a}),
A.\,Abächerli and the author initiated the
investigations of the zero-average Gaussian free field on a certain class
of finite locally tree-like $d$-regular graphs. The present paper
continues these investigations.

In \cite{AC20b} it has been shown that the
level-set percolation of the zero-average Gaussian field on this class of
graphs exhibits a percolation phase transition at a critical level
$h_\star$ in the following sense: With probability tending to one as the
size $N$ of the graphs tends to infinity, whenever $h > h_\star$, the
level set of the zero-average Gaussian free field above level $h$ does
not contain any connected component of size larger than $C_h \log N$,
and, on the contrary, whenever $h < h_\star$, a linear fraction of the
vertices is contained in `mesoscopic' connected components of the level
set above level $h$, that is in components having a size of at least a
small fractional power of $N$. The critical level $h_\star$ agrees with
the percolation threshold of the level set percolation of the usual
Gaussian free field on a $d$-regular tree which was identified in
\cite{Szn16}.

In the subcritical phase, $h>h_\star$, this description of the behaviour
of the level set is satisfactory. On the other hand, in the
supercritical phase, $h<h_\star$, it leaves open the question whether
the mesoscopic components form a giant component, that is a component of
size of order $N$, cf. \cite[Remark~5.7]{AC20b}.

This is a natural question since for other probabilistic models on
essentially the same class of graphs the emergence of the unique giant
component has been shown in the corresponding supercritical phases. The
first example is the Bernoulli bond percolation on $d$-regular expanders
of large girth considered in \cite{ABS04} (for more recent results, see
  \cite{KLS20}). A second example is the percolation of the vacant set
left by the simple random walk on the same class of graphs as considered
here and in \cite{AC20b}, see \cite{CTW11}. In particular the latter
result gives a strong indication that a giant component should emerge
also in the supercritical phase of the level-set percolation, as the two
models share many common features, like similar decay of correlations.

We answer this question affirmatively. To state our results we
first recall the setting of \cite{AC20b}. We fix $d\ge 3$ and assume
that $(\finitegraph)_{n\ge 1}$ is a sequence of graphs satisfying the
following conditions.
\begin{assumptions}
  \label{assumptions}
  There exist $\alpha \in (0,1)$, $\beta >0$, and an increasing sequence of
  positive integers $(N_n)_{n\ge 1}$ with
  $\lim_{n\to\infty}N_n =\infty$ such that for all $n\ge 1$:
  \begin{enumerate}
    \item $\finitegraph = (\vfinitegraph,\efinitegraph)$ is a simple
      connected graph with $N_n$ vertices which is $d$-regular (that is all
        its vertices have degree $d$).

    \item For all $x \in \vfinitegraph$ there is at most
      one cycle in the ball of radius $\lfloor \alpha \log_{d-1}(N_n) \rfloor$
      around $x$.

    \item The spectral gap  of $\finitegraph$, denoted by
      $\lambda_\finitegraph$, satisfies $\lambda_\finitegraph \ge \beta$.
  \end{enumerate}
\end{assumptions}

We refer to \cite{AC20b} for a more detailed discussion of these
assumptions, but recall that they are satisfied for two important classes
of graphs: (a) random $d$-regular  graphs, (b) $d$-regular expanders of
large girth. We also remark that assumptions very similar to ours were
used in recent studies of quantum ergodicity on graphs, and in related
studies of percolation of the level sets of the adjacency eigenvectors (see for
  instance \cite{ES10,AL15}).

On $\finitegraph$ we consider the zero-average Gaussian free field
$\Psi_\finitegraph = (\Psi_\finitegraph(x):{x\in \mathcal V_n})$ which is a
centred Gaussian process on $\mathcal V_n$ whose law
is determined by its covariance function
\begin{equation}
  E [\Psi_\finitegraph(x) \Psi_\finitegraph(y)] =
   G_\finitegraph(x,y) \qquad \text{for all $x,y \in \vfinitegraph$},
\end{equation}
where $G_\finitegraph(\cdot,\cdot)$ is the zero-average Green function
on
$\finitegraph$ (see \eqref{def:zmGreen}, \eqref{eqn:greenfcts} for its definition).

The zero-average Gaussian free field is a natural version of the Gaussian
free field for finite graphs. However, due to the zero-average property,
namely
\begin{equation}
  \label{eqn:zeroaverage}
  \sum_{x\in \vfinitegraph} \Psi_\finitegraph(x) = 0, \qquad \text{a.s.},
\end{equation}
it comes with some peculiarities like the lack of an
FKG-inequality and of the domain Markov property which are instrumental
when studying Gaussian free field on infinite graphs, cf.~\cite[Section~2.2]{AC20b}.

We analyse the properties of the level sets of
$\Psi_\finitegraph$ above level $h\in \mathbb R$, that is of
\begin{equation}
  \label{def:levelset}
  E^{\ge h}(\Psi_\finitegraph)
  \coloneqq \{x\in \vfinitegraph  :   \Psi_\finitegraph(x)\ge h\}.
\end{equation}
In particular, we are interested in the sizes of its largest and second
largest connected components $\mathcal C_{\Max}^{\finitegraph,h}$ and
$\mathcal C_{\Sec}^{\finitegraph,h}$.

For our investigations it is important that the field
$\Psi_{\finitegraph}$ is locally well approximated by the Gaussian free
field $\varphi_\treegraph = (\varphi_{\treegraph}(x): x\in \treegraph)$
on the infinite rooted $d$-regular tree  $\treegraph$ (see the paragraph
  containing~\eqref{eqn:defgfftd} for the definition). For now, we only
define its percolation function
\begin{equation}
  \label{eqn:defeta}
  \eta (h) \coloneqq P\big(\abs{\mathcal C_\rot^{h}} =
    \infty\big),
\end{equation}
where $\mathcal C_{\rot}^{h}$ is the connected component
of the set
$E^{\ge h}(\varphi_\treegraph)
\coloneqq \{x\in \treegraph : {\varphi_\treegraph(x)\ge h}\}$
containing the root
$\rot \in \treegraph$, and we set
\begin{equation}
  \label{def:hstar}
  h_\star \coloneqq \inf \big \{ h\in \mathbb R :  \eta (h) = 0 \big \},
\end{equation}
to be its critical value. From \cite{Szn16} it is known that
$h_\star$ is positive and finite.

We can now state our main result.

\begin{theorem}
  \label{thm:main}
  If $h<h_\star$, then for every sequence of graphs
  $(\finitegraph)_{n\ge 1}$ satisfying Assumption~\ref{assumptions} and
  every $\delta >0$
  \begin{equation}
    \label{eqn:main}
    \lim_{n\to\infty} P\Big(\frac{\abs{ \mathcal
          C_\Max^{\finitegraph,h}}}{N_n}\in (\eta (h)-\delta, \eta
        (h)+\delta  )
      \ \text{and} \ \abs[\big]{\mathcal
        C_\Sec^{\finitegraph,h}}\le \delta   N_n \Big) = 1.
  \end{equation}
\end{theorem}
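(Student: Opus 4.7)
The plan is to follow the three-step scheme developed for giant-component results on locally tree-like expanders (seeds, sprinkling, density), as in \cite{ABS04,CTW11}, adapted to the Gaussian setting. \emph{Seeds.} Pick $h'\in(h,h_\star)$. The main result of \cite{AC20b} furnishes, with probability $1-o(1)$, a set $\mathcal M_n\subset\vfinitegraph$ of size $\ge cN_n$ every vertex of which lies in a connected component of $E^{\ge h'}(\Psi_\finitegraph)$ of size at least $N_n^\kappa$, with $c,\kappa>0$ depending on $h_\star-h'$ and on $d,\alpha,\beta$. Call these the \emph{mesoscopic clusters} at level $h'$.

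\emph{Sprinkling.} Split $\Psi_\finitegraph\stackrel{d}{=}\widetilde\Psi+\zeta$ into two independent centred Gaussian fields via an orthogonal decomposition of the zero-average Green function, arranging that $\widetilde\Psi$ determines the seed configuration at level $h'$ up to an $O(\varepsilon)$ loss in the level, while $\zeta$ has vertex-variance $O(\varepsilon^2)$. The key assertion is: for any two mesoscopic clusters $A,B$, conditional on $\widetilde\Psi$, the probability that $\zeta$ opens a path in $E^{\ge h}(\Psi_\finitegraph)$ from $A$ to $B$ is $1-o(N_n^{-c'})$. The spectral gap $\lambda_\finitegraph\ge\beta$ (Assumption~\ref{assumptions}(c)), applied after a logarithmic enlargement of $A$ and $B$ to sets of size $\ge c N_n$, yields via the expander mixing lemma polynomially many short internally-disjoint paths joining $A$ and $B$; each such path is independently opened by $\zeta$ with probability $N_n^{-c''}$, so with a polynomial excess in the number of paths a Paley--Zygmund estimate produces the required bound. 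A union bound over the $\exp(O(N_n))$ candidate pairs then merges all of $\mathcal M_n$ into a single connected component of $E^{\ge h}(\Psi_\finitegraph)$ of size $\ge cN_n$.

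\emph{Density.} Let $Z_K=\abs{\{x\in\vfinitegraph:x\text{ lies in a component of }E^{\ge h}(\Psi_\finitegraph)\text{ of size}\ge K\}}$. Using Assumption~\ref{assumptions}(b) together with the quantitative local approximation of $\Psi_\finitegraph$ by $\varphi_\treegraph$ on balls of radius $\lfloor\alpha\log_{d-1}N_n\rfloor$ established in \cite{AC20b}, one obtains $E[Z_K/N_n]\to P(\abs{\mathcal C_\rot^h}\ge K)$ as $n\to\infty$. A second-moment bound, built on the $(d-1)^{-\dist/2}$-type decay of the zero-average Green function recorded in \cite{AC20b}, upgrades this to $Z_K/N_n\to P(\abs{\mathcal C_\rot^h}\ge K)$ in probability. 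Letting $K\to\infty$, the right-hand side tends to $\eta(h)$ by \eqref{eqn:defeta}. Combined with the sprinkling step this forces all but $o(N_n)$ of the vertices counted by $Z_K$ into the single merged cluster, yielding $\abs{\mathcal C_\Max^{\finitegraph,h}}/N_n\to\eta(h)$ and $\abs{\mathcal C_\Sec^{\finitegraph,h}}/N_n\to 0$, which is \eqref{eqn:main}.

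The principal obstacle is the sprinkling step, since $\Psi_\finitegraph$ satisfies neither the FKG inequality nor the domain Markov property (cf.\ the remark after \eqref{eqn:zeroaverage}). The decomposition $\Psi_\finitegraph=\widetilde\Psi+\zeta$ must be engineered so that $\widetilde\Psi$ essentially determines the seed event (permitting a union bound indexed by $\widetilde\Psi$-measurable pairs $(A,B)$), while $\zeta$ is independent, has vertex-variance $O(\varepsilon^2)$, and still carries enough entropy to open short paths with probability bounded below. The global constraint \eqref{eqn:zeroaverage} entangles the low-frequency modes of any orthogonal decomposition nontrivially, so balancing smallness of $\varepsilon$, approximate $\widetilde\Psi$-measurability of the seed event, and enough independence in $\zeta$ to exhaust polynomially many candidate bridges is where the bulk of the technical work must go.
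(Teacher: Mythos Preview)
Your three-step outline (seeds, sprinkling, density) matches the paper's, and your density step is essentially Lemma~\ref{lem:smallcomponents}. The gap is in the sprinkling, and it is precisely the gap the paper spends most of its length closing.

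You assert that ``each such path is independently opened by $\zeta$''. This requires $\zeta$ to have (nearly) independent site values, which a generic ``orthogonal decomposition of the zero-average Green function'' does not give: the natural split $\Psi_\finitegraph=\sqrt{1-t^2}\Psi'+t\Psi''$ into scaled independent copies yields a $\zeta=t\Psi''$ carrying the same long-range correlations as $\Psi_\finitegraph$, so the path-opening events are far from independent and no Paley--Zygmund or product estimate produces the stretched-exponential failure probability you need. The paper's introduction explicitly rules out this route. There is also a quantitative mismatch in your union bound as written: the number of splittings of the mesoscopic components is $2^{a N_n/m_n}=2^{aN_n^{1-\kappa}}$, not $\exp(O(N_n))$, and one needs failure probability $\exp(-cN_n^{1-\kappa'})$ with $\kappa'<\kappa$ per splitting, not merely $o(N_n^{-c'})$; with the numbers you stated the union bound is vacuous.

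The paper's resolution is a specific decomposition $\Psi_\finitegraph=\sum_{k\ge0}\xi^k_\finitegraph$ (Section~\ref{sec:decomposition}) in which $\xi^0_\finitegraph$ is, up to the deterministic zero-averaging shift, an i.i.d.\ Gaussian field; the sprinkling field is then $\Psi^2_\finitegraph=t_n\,\xi^{0,2}_\finitegraph$ for an independent copy of $\xi^0_\finitegraph$, which does give independent path-opening events. But this creates a second problem you did not anticipate: the leftover $\Psi^1_\finitegraph$ is no longer a zero-average Gaussian free field, so \cite{AC20b} no longer furnishes seeds for it, and your plan of reading off the seeds from $\Psi_\finitegraph$ and calling them ``approximately $\widetilde\Psi$-measurable'' does not survive the required union bound either. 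Sections~\ref{sec:robust}--\ref{sec:mesoscopic} close this by a perturbative argument on the tree: they show that supercritical components of $\varphi_\treegraph$ are \emph{robust}, i.e.\ survive the perturbation $\varphi_\treegraph\mapsto\varphi^1_\treegraph=\varphi_\treegraph-t\mathtt Z_0^2$ (via a multi-type branching-process analysis of the operator $L_h^{p,\gamma}$), and then transfer this to $\Psi^1_\finitegraph$ through a coupling built from the same decomposition. A further device (restricting to the random subgraph $\bfinitegraph$ where the sprinkling noise is bounded below) is needed so that adding $\bar\Psi^2_\finitegraph$ does not destroy the seeds. Your last paragraph correctly flags the decomposition as the crux, but supplies neither the i.i.d.\ structure of $\zeta$ nor the separate seed argument for the non-GFF $\widetilde\Psi$; without both ingredients the sprinkling does not go through.
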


Theorem~\ref{thm:main} confirms the emergence of the giant component in
the supercritical phase of the model, gives its typical size, and
provides its uniqueness. Together with the description of the subcritical
behaviour from Theorem~4.1 of \cite{AC20b} (which states that
  $\lim_{n\to\infty} P(\abs{\mathcal C_\Max^{\finitegraph,h}} \le C_{h}
    \log N_n) =1$
  for $h>h_\star$) it establishes a fully-fledged percolation phase
transition for the level-set percolation of the zero-average Gaussian free
field on~$\finitegraph$.

Assumption~\ref{assumptions} of Theorem~\ref{thm:main}
can be weakened slightly, as explained in Remark~\ref{rem:assumptions} at
the end of the paper. However, for these weakened assumptions we do not
have the corresponding subcritical description. We thus prefer to work
in the same setting as in~\cite{AC20b}.

Similarly as in \cite{ABS04,CTW11,KLS20}, we use a sprinkling technique
to show that the mesoscopic components (that we know to exist due to
  \cite[Theorem~5.1]{AC20b}) indeed form a giant component. Making the
sprinkling work in the settings of dependent percolation is however
rather challenging, as was already observed in \cite{CTW11}, in the
context of the vacant set left by a random walk. In the context of Gaussian free
field, sprinkling was previously used in \cite{DR15}, to show the
existence of an infinite connected component of the supercritical level set
on $\mathbb Z^d$ when $d\to \infty$. The diverging dimension is important
for the arguments therein, since the correlations of the field decay with
the dimension (as $d^{-1}$ for the neighbouring vertices). Several
sprinkling steps are also used in the recent paper
\cite{DGRS20}, which proves the sharpness of the phase transition for the
level set of the Gaussian free field on $\mathbb Z^d$, $d\ge 3$. Note
also that the results of \cite{DGRS20} can be combined with \cite{Aba19}
to show the existence of the giant component for the supercritical level
set of the zero-average Gaussian free field on a large discrete torus
(cf.~\cite[Section~1.2]{DGRS20}).

Very recently, a result similar to Theorem~\ref{thm:main} was proved by
G.\,Conchon-Kerjan in \cite{Con21}. Namely, assuming
that $\mathcal G_n$ is a \emph{uniformly random}
$d$-regular graph with $n$ vertices, he shows that under the \emph{annealed} probability
measure $P_{\mathrm{ann}}$ (that is taking into account the randomness of
  the graph and the field),
\begin{equation}
  \label{eqn:con}
  \lim_{n\to\infty }P_{\mathrm{ann}}\Big(
    \frac{\abs{\mathcal C_\Max^{\finitegraph,h}}}{n} \in (\eta (h)-\delta , \eta
      (h)+\delta )  \text{ and }
    \frac{ \abs{\mathcal C_\Sec^{\finitegraph,h}}}{\log n}\in(c,c')
    \Big) = 1,
\end{equation}
for every $\delta >0$ and some $0<c<c'<\infty$,
and further gives a
rather detailed description of the geometry of
$\mathcal C_\Max^{\finitegraph,h}$.
The arguments of \cite{Con21} are
completely different from the ones used in this paper and rely strongly
on the assumption that $\mathcal G_n$ is a random regular graph, and thus
can be revealed progressively using the usual pairing construction. As
discussed above, this assumption is stronger than our
Assumption~\ref{assumptions}.

The estimate on $\abs{\mathcal C_\Sec^{\finitegraph,h}}$ in
\eqref{eqn:con} is essentially optimal  and better than our estimate on
the same quantity in \eqref{eqn:main}. Incidentally, this resembles the
previously known results for the vacant set of random walk: On the same
class of graphs as here \cite[Theorem~1.3]{CTW11}, only shows that the
second largest connected component of the vacant set is $o(N_n)$, while
on random regular graphs it can be proved that it is $O(\log N_n)$, by
combining the techniques of \cite{CF13,MR98}. Note also that \cite{KLS20}
shows that for any $\omega <1$ there are regular expanders with an
arbitrarily large girth such that the second connected component of the
Bernoulli bond percolation has size growing at least as $N_n^\omega $,
which indicates that the exact asymptotic behaviour of
$\abs{\mathcal C_\Sec^{\finitegraph,h}}$ might be a delicate issue.

Let us now comment on the proof of Theorem~\ref{thm:main}. To explain
its main ideas, it is useful to discuss the sprinkling
construction for the Bernoulli percolation from \cite{ABS04} first. This
construction relies on the
fact that a percolation configuration
$(\omega^p(x))_{x\in \vfinitegraph} \in \{0,1\}^{\vfinitegraph}$ at level
$p$ can be obtained as the maximum of two independent Bernoulli
configurations $\omega^{p_1}$ and $\omega^{p_2}$ with the levels $p_1$, $p_2$
satisfying $1-p=(1-p_1)(1-p_2)$. For the techniques of \cite{ABS04} to work,
it is very important that (a) $\omega^{p_2}$ is
independent of~$\omega^{p_1}$, (b) $\omega^{p_2}$ is a Bernoulli percolation, that is
the random variables $(\omega^{p_2}(x))_{x\in \vfinitegraph}$ are
independent, and (c) that the maximum function is monotonous, in particular
$\{x:{\omega^{p}(x)=1}\}\supset \{{x:\omega^{p_1}(x)=1}\}$. While (c) is
important for the sprinkling not to destroy the mesoscopic components of
$\omega^{p_1}$, (a) and (b) play a key role in estimating the probability of a certain
bad event which needs to be much smaller than $e^{-cN_n/m_n}$,
with $m_n$ denoting the minimal size of mesoscopic components (cf.~proof
  of Proposition~3.1 in \cite{ABS04}). In \cite{ABS04}, the proof of this
estimate is just a simple large deviation argument for i.i.d.~Bernoulli
random variables. Unfortunately, a corresponding estimate is mostly
simply not true in the setting of correlated Gaussian fields.

Before describing our approach, let us very quickly mention two natural
ideas how to adapt the sprinkling construction of \cite{ABS04} to the
zero-average Gaussian free field which,  unfortunately, cannot
easily be converted into a rigorous proof, mostly due to the lack of
independence.  The first one is to use the existence of many mesoscopic
components at a level $h'\in (h,h_\star)$ and prove that by lowering the
level from $h'$ to $h$ those components merge. This preserves the
monotonicity, that is the point (c) from the last paragraph, but
completely destroys the independence (a) and (b), making the above
mentioned estimate on the bad event essentially impossible to prove. The
second one is to write $\Psi^\finitegraph$ as a linear combination
$ \sqrt {1-t^2}\Psi_\finitegraph' +  t \Psi_\finitegraph''$ (with a small
  $t$) of its independent copies $\Psi_\finitegraph', \Psi_\finitegraph''$.
Here, the monotonicity (c) is lost (but probably could be salvaged by
  some technical work), (a) is preserved, but the correlations of
$\Psi_\finitegraph''$ make the estimate on the bad event fail again.
Remark also that the zero average property \eqref{eqn:zeroaverage}
excludes writing $\Psi_\finitegraph $ as a sum $X+Y$ of two non-trivial
independent fields $X$, $Y$ such that $Y\ge 0$, or as $\max(X,Y)$ for $X$,
$Y$ independent; both of these decompositions would be desirable for the
monotonicity~(c).

In this paper we thus develop a new decomposition of the zero-average
Gaussian free field which provides enough independence to be useful in a
sprinkling argument and which is of independent interest,
see Section~\ref{sec:decomposition}. It is inspired by a similar
decomposition of the (usual) Gaussian
free field on $\mathbb Z^d$ from \cite{DGRS20}. Using this decomposition
we will  write $\Psi_\finitegraph$ as a sum of two independent components
$\Psi_\finitegraph = \Psi^1_\finitegraph + \Psi^2_\finitegraph$, where
\begin{equation}
  \label{eqn:introsprinkling}
  \Psi^2_\finitegraph(x) \coloneqq t_n\Big(Z_0(x) - N^{-1}\sum_{y\in \vfinitegraph}
      Z_0(y)\Big), \qquad x\in \mathcal V_n,
\end{equation}
for some family $(Z_0(x))_{x\in \vfinitegraph}$ of i.i.d.~Gaussian random
variables.  Since the field $\Psi^2_\finitegraph$ is essentially
an i.i.d.~field, up to the zero-average property, this writing preserves
(a) and (b) from the above discussion, but gives up on the monotonicity (c).
We will deal with the non-monotonicity issue by taking $t_n$ small and by
restricting the connected components of the level set to certain
subgraphs of $\finitegraph$ where $Z_0$ is not too small. These arguments
are relatively straightforward and
are given in Sections~\ref{sec:expansion},~\ref{sec:sprinkling}.

The decomposition, however, introduces a new problem: the field
$\Psi_\finitegraph^1$ is not longer a zero-average Gaussian free field
and we thus do not know that it has many mesoscopic components in the
whole supercritical phase $h<h_\star$. To show this we will use a
perturbative argument. More precisely, we use the fact that
$\Psi_\finitegraph^1 = \Psi_\finitegraph - \Psi^2_\finitegraph$ (with
  $\Psi_\finitegraph$ and $\Psi_\finitegraph^2$ dependent!) and that
$\Psi_\finitegraph$ has many mesoscopic components at any level
$h'\in (h,h_\star)$, by \cite[Theorem~5.1]{AC20b}. We then show that,
typically, these components are robust to certain perturbations and are
thus not destroyed by subtracting $\Psi^2_\finitegraph$. The proof of the
existence
of the robust components is based on multi-type branching process arguments
developed in \cite{AC20b}. Its details are given in
Sections~\ref{sec:robust}--\ref{sec:mesoscopic}. On the way, in
Section~\ref{sec:coupling}, we use the decomposition of
$\Psi_\finitegraph$ from Section~\ref{sec:decomposition} to construct a
new coupling of $\Psi_\finitegraph$ and~$\varphi_\treegraph$.

\section{Preliminaries}
\label{sec:prelim}

In this section we introduce the notation and recall few useful facts
that we use throughout the paper. For an arbitrary locally-finite,
simple, non-oriented graph $G$ we denote by $V(G)$ and $E(G)$ the sets of
its vertices and edges. For $x,y\in V(G)$, we write $x\sim y$ when
$(x,y)\in E(G)$, $d_G(\cdot,\cdot)$ denotes their graph distance, and
$\deg_G(x)$ the degree of $x$ in $G$. For any $U\subset  V(G)$,  $|U|$
stands for its cardinality, and
$\partial_G  U \coloneqq \{y\in V(G) \setminus U: \exists {x\in U} \text{ s.t. }
  {x\sim y} \}$
denotes its (outer vertex) boundary.  For any $r\ge0 $ and $x \in  V(G)$
we define the ball and sphere of radius $r$ around $x$ to be
$B_G(x,r) \coloneqq \{y \in V(G) : {d_G(x,y) \leq r} \} $ and
$S_G(x,r) \coloneqq \{y \in V(G) : d_G(x,y) = r \} $.

We write $\bar P_x^{G}$ for
the canonical law on $V(G)^{\mathbb N}$ of the \emph{lazy} simple random
walk $X=(X_k)_{k\ge 0}$ on $G$ starting at $x \in V(G)$, and $\bar E_x^{G}$ for the
corresponding expectation. Under $\bar P_x^G$, the transition probabilities of $X$
are given by
\begin{equation}
  \bar P_x^G(X_{k+1}=z\mid  X_k =y) = \begin{cases}
    \tfrac 12,&\text{if $z=y$},\\
    \tfrac 1{2\deg_G(x)},\quad& \text{if $(z,y)\in E(G)$}.
  \end{cases}
\end{equation}
If $G$ is a finite connected graph, we denote the unique invariant
distribution of $X$ by
\begin{equation}
  \label{eqn:pidef}
  \pi_G (x) \coloneqq \frac{\deg_G(x)}{2  \abs{E(G)}}.
\end{equation}
The zero-average Green function $\bar G_G$ of $X$ and its density are given by
\begin{equation}
  \label{def:zmGreen}
  \begin{split}
    \bar G_G(x,y) &\coloneqq  \sum_{k \ge 0} (\bar P^G_x(X_k = y) - \pi_G (y)),
    \qquad x,y\in V(G),
    \\ \bar g_G(x,y) &\coloneqq (\deg_G(y))^{-1} \bar G_G(x,y).
  \end{split}
\end{equation}
It is easy to check from the reversibility of the random walk that $\bar g_G(x,y)$ is a
symmetric function.
Zero-average Gaussian free field on $G$ is a
centred Gaussian process $(\Psi_G(x))_{x\in V(G)}$
whose law is determined by its covariance function
\begin{equation}
  \label{eqn:generalzerogff}
  E(\Psi_G(x) \Psi_G(y)) = C_0 \bar g_G(x,y) \qquad \text{for all } x,y\in
  V(G).
\end{equation}
The constant $C_0$ only influences the scaling of the field and is
introduced for convenience. If $G$ is $d$-regular, as in
Assumption~\ref{assumptions}(a), it customary to take
$C_0=d/2$. With this choice,
\begin{equation}
  \label{eqn:greenfcts}
  \bar g_G(x,y) = \tfrac 12 \bar G_G(x,y) = G_G(x,y),
\end{equation}
where $G_G(\cdot,\cdot)$ is the zero-average Green function of the
usual continuous-time random walk on $G$ (the factor $\frac 12$ disappears due
  to the laziness), and thus the covariance from \eqref{eqn:generalzerogff} agrees with
the one used in \cite{AC20b}, cf.~(2.17) therein.

For any field $f$ on $G$ we denote by
$E^{\ge h}(f) \coloneqq \{x\in V(G): f(x)\ge h\}$ its level set above
the level $h\in \mathbb R$.

We use $\treegraph$ to denote the $d$-regular infinite tree with root
$\rot$. For every $x\in V(\treegraph)$ we denote by $\children(x)$ the
set of its direct descendants, and for
$x\in V(\treegraph) \setminus \{\rot\}$ we use $\anc(x)$ to denote the
direct ancestor of $x$ in $\treegraph$. The Gaussian free field on
$\treegraph$ is a centred Gaussian process
$(\varphi_\treegraph(x))_{x\in V(\treegraph)}$
whose distribution is determined by
\begin{equation}
  \label{eqn:defgfftd}
   E(\varphi_\treegraph(x)\varphi_{\treegraph}(y)) =
  g_{\treegraph}(x,y) \qquad \text{for all } x,y\in V(\treegraph),
\end{equation}
where $g_\treegraph$ is the Green function of the (usual discrete-time)
simple random walk on $\treegraph$.

As mentioned earlier, we consider for fixed $d\ge 3$ the $d$-regular
graphs $(\finitegraph)_{n\ge 1}$ satisfying Assumption~\ref{assumptions},
and we abbreviate $\vfinitegraph = V(\finitegraph)$,
$\efinitegraph = E(\finitegraph)$. For $r\ge 0$, we say that a vertex
$x\in \vfinitegraph$ is $r$-treelike, if there is no cycle in
$B_\finitegraph(x,r)$. If $x$ is $r$-treelike, then we fix a graph
isomorphism $\rho_{x,r}:B_\finitegraph(x,r) \to B_\treegraph(\rot, r)$
such that $\rho_{x,r}(x) = \rot$.

We recall from \cite[Proposition~2.2]{AC20b} that there is
$\varepsilon \in (0,1)$ such that for every $n\ge 1$ and
$x,y\in \vfinitegraph$,
\begin{equation}
  \label{eqn:greenfunctionest}
  \bar g_\finitegraph(x,y) \le C (d-1)^{-d_\finitegraph(x,y)} +
  N_n^{-\varepsilon }.
\end{equation}

Finally, Assumption~\ref{assumptions}(a,c) imply (by Cheeger's
  inequality, for the argument see e.g.~\cite[(2.11)]{CTW11}) the uniform
isoperimetric inequality for the sequence $\finitegraph$:
\begin{equation}
  \label{eqn:expansion}
  \parbox{7.0cm}{There is $\beta '>0$ such that
    $\frac{\abs{\partial_\finitegraph A}}{\abs{A}}\ge \beta'$ for all
    $n\ge1$ and $A\subset \vfinitegraph$ with
    $\abs A \le \abs \vfinitegraph/2$.}
\end{equation}

We use $c$, $c'$, $C$, \dots to denote
positive constants with values changing from place to place and which only depend on
the degree $d$ and the constants $\alpha $ and $\beta $ from
Assumption~\ref{assumptions}.

\section{Decomposition of the field} 
\label{sec:decomposition}

The goal of this section is to construct a decomposition of the
zero-average Gaussian free field into independent components.  We believe
that this decomposition is of independent interest. It is the main
ingredient of our sprinkling construction, as described in the
introduction, but also will be used in Section~\ref{sec:coupling} to
construct a new coupling of $\Psi_\finitegraph$ and $\varphi_\treegraph$.
The construction of this decomposition is inspired by a similar
decomposition for the usual Gaussian free field on $\mathbb Z^d$ from
\cite{DGRS20}, see Lemma~3.1 therein. However, the zero-average property
introduces certain complications making the decomposition less
straightforward.

For sake of generality, we consider an arbitrary finite, simple,
non-oriented, connected graph $G=(V(G), E(G))$ in this section. That is we
do not require that Assumption~\ref{assumptions} holds.

Recall the definition of $\Psi_G$ from \eqref{eqn:generalzerogff}.
To introduce its decomposition we need more notation. We write $\tilde G$
for the graph obtained from $G$ by adding an additional vertex to the middle of every
edge of $G$, formally $\tilde G = (V(\tilde G), E(\tilde G))$ with
\begin{align}
  V(\tilde G) &\coloneqq V(G) \cup E(G),\\
  E(\tilde G) &\coloneqq \{(x,e): x \in V(G), e\in E(G), e \ni x\}.
\end{align}
Observe that $\tilde G$ is a bipartite graph.
For $\tilde x\in V(\tilde G)$, let
\begin{equation}
  \label{eqn:tildepi}
  \tilde \pi_G(\tilde x)
  \coloneqq \deg_{\tilde G}(\tilde x)
  = \begin{cases}
    \deg_G(\tilde x),\quad&\text{if }\tilde x \in V(G),\\
    2,\quad&\text{if }\tilde x \in E(G).
  \end{cases}
\end{equation}
Let
\begin{equation}
  \tilde Q_G(\tilde x,\tilde y) =
  \bbone_{(\tilde x,\tilde y) \in  E(\tilde G)}/\tilde\pi_G (\tilde x),
  \qquad \tilde x,\tilde y\in V(\tilde G),
\end{equation}
be the transition matrix of the \emph{usual} simple
random walk on $\tilde G$. $\tilde Q_G$ acts on the space
$\ell^2(\tilde \pi_G )$ by
$\tilde Q_G f (\tilde x)
= \sum_{\tilde y\in  V(\tilde G)} \tilde Q_G(\tilde x,\tilde y)f(\tilde y)$.
Due to the reversibility, $\tilde Q_G$ is a self-adjoint operator on
$\ell^2(\tilde\pi_G )$. Since $\tilde G$ is connected and bipartite,
$\tilde Q_G$ has simple eigenvalues $1$ and $-1$ with respective eigenfunctions
$\bbone $ and~$w$, where $w(\tilde x) = 1$ if $\tilde x\in V(G)$ and
$w(\tilde x)=- 1$ if $\tilde x \in E(G)$. Denoting by
$\norm{\cdot}_{\tilde \pi_G }$ the  $\ell^2(\tilde \pi_G )$-norm and by
$\langle \cdot,\cdot \rangle_{\tilde \pi_G }$ the corresponding scalar
product, we have
\begin{equation}
  \label{eqn:normvw}
  \norm{\bbone}^2_{\tilde\pi_G } = \norm{w}^2_{\tilde\pi_G }
  = \sum_{\tilde x \in V(\tilde G)} \tilde\pi_G (\tilde x) = 4  \abs{E(G)}, \qquad
  \langle \bbone , w\rangle_{\tilde\pi_G } = 0.
\end{equation}

Let $\Pi_G $ be the orthogonal projection (in $\ell^2 (\tilde\pi_G )$) on
$\spn(\bbone, w)$. Then $\Pi_G^2 = \Pi_G$ and $\Pi_G$ is self-adjoint in
$\ell^2(\tilde\pi_G )$. Moreover, since $\bbone $ and $w$ are eigenvectors
of $\tilde Q_G$, the operators $\Pi_G$ and $\tilde Q_G$ commute, and thus, for
every $k\in \mathbb N$,
the operators
$\tilde Q_G^k \Pi_G = \Pi_G \tilde Q_G^k$ and $(\Id-\Pi_G)\tilde Q_G^k$ are self-adjoint
as well (here $\Id$ stands for the identity operator). For later use we
observe that for
$f\in \ell^2(\tilde \pi_G)$ and $y\in V(G)$ (so that $w(y) = 1$),
\begin{equation}
  \label{eqn:PIexpl}
  \begin{split}
    (\Pi_G f) (y)
    &= \norm \bbone_{\tilde \pi_G}^{-2} \langle f, \bbone  \rangle_{\tilde \pi_G}
    \bbone(y)
    + \norm w_{\tilde \pi_G}^{-2} \langle f, w  \rangle_{\tilde \pi_G}
    w(y)
    \\&\refoverseteq{eqn:normvw}
    \frac 1{4\abs {E(G)}}
    \sum_{\tilde x\in V(\tilde G)} f(\tilde x) (1+w(\tilde x)) \tilde \pi_G
    (\tilde x)
    \\&\refoverseteq{eqn:tildepi}
    \frac 1{2\abs {E(G)}}
    \sum_{x\in V(G)} f(x) \deg_{G}(x) \refoverseteq{eqn:pidef}
    \sum_{x\in V(G)} f(x)\pi_G(x).
  \end{split}
\end{equation}

With $C_0$ as in \eqref{eqn:generalzerogff}, let
$(Z_k (\tilde x))_{k \in \mathbb N,\tilde x \in V(\tilde G)}$ be
independent centred Gaussian random variables with
\begin{equation}
  \label{eqn:genZvariance}
  \Var Z_k (\tilde x) =  {C_0}/{\tilde\pi_G (\tilde x)},
\end{equation}
defined on
a probability space $(\Omega , \mathcal A, P)$. Finally, for
$x\in V(G)$, set
\begin{align}
  \xi_G^k( x)& \coloneqq \sum_{\tilde y \in V(\tilde G)}
  ((\Id - \Pi_G ) \tilde Q_G^k)(x, \tilde y) Z_k(\tilde y)
  \overset{\text{(not.)}}
  = \big((\Id-\Pi_G)\tilde Q_G^k Z_{k}\big)(x),
  \\
  \label{eqn:tildePsi}
  \tilde \Psi_G(x) &\coloneqq \sum_{k\in \mathbb N} \xi_G^k (x).
\end{align}

We now show that \eqref{eqn:tildePsi} provides the desired decomposition of
$\Psi_G$.

\begin{proposition}
  \label{pro:decomposition}
  The series on the right-hand side of \eqref{eqn:tildePsi} converges in
  $L^2(P)$ and $P$-a.s., and the
  law of  $\tilde \Psi_G$ agrees with the law of $\Psi_G$, that is
  $\tilde\Psi_G$ is a zero-average Gaussian free field on $G$.
\end{proposition}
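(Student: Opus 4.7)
The strategy is to verify three things: (i) the series in \eqref{eqn:tildePsi} converges in $L^2$ and almost surely, (ii) the limit $\tilde\Psi_G$ is a centred Gaussian field, and (iii) its covariance equals that of $\Psi_G$. Since the collections $\{Z_k(\tilde x)\}_{\tilde x\in V(\tilde G)}$ are independent across $k$, the fields $\xi_G^k$ are independent centred Gaussians. Each partial sum is therefore a centred Gaussian vector on the finite set $V(G)$, and by classical results for sums of independent centred random variables with finite variance, the series converges both in $L^2$ and a.s.\ as soon as $\sum_k \Var(\xi_G^k(x))<\infty$; the $L^2$ limit is then automatically centred Gaussian, since finite-dimensional Gaussianity is preserved under $L^2$ limits. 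Hence (i) and (ii) follow once (iii) is established and the $y=x$ case of the covariance is seen to be finite.

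For (iii), set $A_k\coloneqq(\Id-\Pi_G)\tilde Q_G^k$, which is self-adjoint on $\ell^2(\tilde\pi_G)$ by what is already recorded in the excerpt. Using the independence of the $Z_k(\tilde z)$'s, their variance \eqref{eqn:genZvariance}, and the reversibility identity $A_k(y,\tilde z)/\tilde\pi_G(\tilde z)=A_k(\tilde z,y)/\tilde\pi_G(y)$, I would first write
\begin{equation*}
E[\xi_G^k(x)\xi_G^k(y)]=C_0\sum_{\tilde z\in V(\tilde G)}\frac{A_k(x,\tilde z)A_k(y,\tilde z)}{\tilde\pi_G(\tilde z)}=\frac{C_0\,A_k^2(x,y)}{\tilde\pi_G(y)}.
\end{equation*}
Since $\Pi_G$ is an orthogonal projection commuting with $\tilde Q_G$, one has $A_k^2=(\Id-\Pi_G)^2\tilde Q_G^{2k}=(\Id-\Pi_G)\tilde Q_G^{2k}$. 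The key geometric observation is that, starting from any $x\in V(G)$, two steps of the simple random walk on $\tilde G$ return to $x$ with probability $\tfrac12$ and jump to each $G$-neighbour of $x$ with probability $1/(2\deg_G(x))$, which are precisely the one-step transitions of the lazy simple random walk on $G$; hence $\tilde Q_G^{2k}(x,y)=\bar P_x^G(X_k=y)$ for $x,y\in V(G)$. For the projection part, since $\tilde Q_G$ acts on $\spn(\bbone,w)$ with eigenvalues $\pm1$, the operator $\tilde Q_G^{2k}$ is the identity there, so $\Pi_G\tilde Q_G^{2k}=\Pi_G$; evaluated at $(x,y)$ with $y\in V(G)$, formula \eqref{eqn:PIexpl} gives $\pi_G(y)$.

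Combining these two facts yields
\begin{equation*}
A_k^2(x,y)=\bar P_x^G(X_k=y)-\pi_G(y),
\end{equation*}
and summing over $k\ge 0$, together with \eqref{def:zmGreen}, $\tilde\pi_G(y)=\deg_G(y)$, and \eqref{eqn:greenfcts}, gives $E[\tilde\Psi_G(x)\tilde\Psi_G(y)]=C_0\bar g_G(x,y)$, exactly matching \eqref{eqn:generalzerogff}. The diagonal case $y=x$ is finite because the lazy random walk on the finite connected graph $G$ converges to $\pi_G$ at a geometric rate (positive spectral gap), and this finiteness is precisely what is needed for the convergence statement in (i), which together with the preserved Gaussianity settles (ii). The main delicate point I expect is the operator/kernel bookkeeping on $\ell^2(\tilde\pi_G)$, in particular carefully distinguishing the pointwise product of kernel rows from the kernel of an operator product and invoking self-adjointness at the right moment; once that algebraic reduction and the ``two steps on $\tilde G$ from $V(G)$ equals one step of the lazy walk on $G$'' observation are in hand, the computation essentially writes itself.
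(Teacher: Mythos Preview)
Your proposal is correct and follows essentially the same route as the paper: compute $\Cov(\xi_G^k(x),\xi_G^k(y))$ via self-adjointness to obtain $C_0\,((\Id-\Pi_G)\tilde Q_G^{2k})(x,y)/\tilde\pi_G(y)$, identify the $\Pi_G$-part with $\pi_G(y)$ and $\tilde Q_G^{2k}|_{V(G)}$ with the $k$-step lazy walk, then use geometric convergence to stationarity for summability and a standard argument for a.s.\ convergence. The only cosmetic differences are your use of the shortcut $\Pi_G\tilde Q_G^{2k}=\Pi_G$ (the paper instead expands in the eigenbasis in \eqref{eqn:PiQk}) and your appeal to \eqref{eqn:PIexpl} for the kernel value, which indeed yields $\Pi_G(x,y)=(\Pi_G\bbone_y)(x)=\pi_G(y)$ for $x,y\in V(G)$.
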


\begin{proof}
  We start by computing the covariances of the fields $\xi_G^k$.
  Using the independence of $Z_k(\tilde x)$'s,
  the self-adjointness of $(\Id - \Pi_G)\tilde Q_G^k$ and the fact that $\Pi_G $
  and $\tilde Q_G$ commute, for every $x,y\in V(G)$,
  \begin{equation}
    \label{eqn:xicovariance}
    \begin{split}
      \Cov(\xi_G^k( x)&, \xi_G^k( y)) =\!\!
      \sum_{\tilde z\in V(\tilde G) }
      ((\Id - \Pi_G)\tilde Q_G^k)(x,\tilde z)
      ((\Id - \Pi_G)\tilde Q_G^k)(y,\tilde z) \frac {C_0} {\tilde\pi_G (\tilde z)}
      \\& =
      \frac {C_0}{\tilde\pi_G ( y)}
      \sum_{\tilde z\in V(\tilde G) }
      ((\Id - \Pi_G)\tilde Q_G^k)(x,\tilde z)
      ((\Id - \Pi_G)\tilde Q_G^k)(\tilde z,y)
      \\&=
      \frac {C_0}{\tilde\pi_G ( y)}
      ((\Id -\Pi_G)\tilde Q_G^{2k})( x,  y).
    \end{split}
  \end{equation}
  To compute the terms involving $\Pi_G$,
  let $(v_i)_{i=1,\dots, \abs{V(\tilde G)}}$ be an orthonormal
  basis of $\ell^2(\tilde\pi_G)$ composed by the eigenvectors of
  $\tilde Q_G$
  such that $v_1 = \bbone /\norm{\bbone}_{\tilde\pi_G }$ and
  $v_2 = w/\norm{w}_{\tilde\pi_G }$, and let
  $(\lambda_i)_{i=1,\dots,\abs{V(\tilde G)}}$ be the corresponding
  eigenvalues. Observe $\Pi_G v_i = 0$ for $i\ge 3$ and that
  $\tilde Q_G^{2k}(f) =
  \sum_{i=1}^{\abs{V(\tilde G)}}
  \langle v_i, f \rangle_{\tilde\pi_G } \lambda_i^{2k}v_i$. Hence, for
  $x,y\in V(G)$,
  \begin{equation}
    \label{eqn:PiQk}
    \begin{split}
      (\Pi_G \tilde Q_G^{2k})(x,y)
      &= \frac 1 {\tilde\pi_G (x) }
      \langle \bbone_x, (\Pi_G \tilde Q_G^{2k}) \bbone_y\rangle_{\tilde\pi_G }
      \\&=
      \frac 1{\tilde\pi_G(x)} \sum_{i=1}^{|V(\tilde G)|}
      \big\langle \bbone_x, \langle v_i, \bbone_y\rangle_{\tilde\pi_G }
      \lambda_i^{2k} \Pi_G v_i\big\rangle_{\tilde\pi_G }
      \\&=
      \langle \bbone_y,v_1\rangle_{\tilde\pi_G } v_1(x)
      + \langle \bbone_y,v_2\rangle_{\tilde\pi_G } v_2(x)
      \\&=
      (\Pi_G\bbone_y)(x)
      \refoverseteq{eqn:PIexpl}\pi_G (y).
    \end{split}
  \end{equation}
  Hence, by \eqref{eqn:xicovariance}, \eqref{eqn:PiQk} and \eqref{eqn:tildepi},
  \begin{equation}
    \label{eqn:covxi}
    \Cov (\xi_G^k( x), \xi_G^k( y))  = \frac{C_0}{\deg_G(y)}(\tilde
      Q_G^{2k}(x,y) - \pi_G (y)),
    \qquad x,y\in V(G), k\in \mathbb N.
  \end{equation}
  The matrix $\tilde Q_G^{2k}$ restricted to $V(G)$ agrees with the
  $k$-step transition matrix of the lazy random walk on $G$, that is
  $Q_G^{2k}(x,y)= \bar P^G_x(X_k=y)$. In
  particular, due to standard convergence results for Markov chains,
  $ \abs{Q_G^{2k}(x,y) - \pi_G (y)}\le C e^{-c k}$ for all $x,y\in V(G)$, and
  thus also  $\abs{\Cov (\xi_G^k( x), \xi_G^k( y))} \le Ce^{-ck}$. This
  implies that the series in \eqref{eqn:tildePsi} converges in $L^2(P)$.
  The a.s.~convergence is then standard, e.g. using Kolmogorov's maximal
  inequality. Finally, \eqref{eqn:covxi} implies that
  \begin{equation}
    \begin{split}
      \Cov\big( \tilde \Psi_G (x), \tilde \Psi_G(y)\big)
      &=
      \sum_{k\in \mathbb N} \Cov \big(\xi_G^k(x), \xi_G^k(y)\big)
      \\&= \frac {C_0} {\deg_G(y)}
      \sum_{k\in \mathbb N} \big(\bar P^G_x(X_k = y) - \pi_G (y)\big),
    \end{split}
  \end{equation}
  which agrees with the covariance of $\Psi_G$ from
  \eqref{eqn:generalzerogff}. Since $\tilde \Psi_G$ is obviously a centred
  Gaussian field, this completes the proof.
\end{proof}

\section{Coupling with a tree} 
\label{sec:coupling}

We now come back to our original setting of Assumption~\ref{assumptions}
and construct, in Proposition~\ref{pro:coupling} below, a coupling
between the zero-average Gaussian free field $\Psi_{\finitegraph}$  and
the Gaussian free field $\varphi_{\treegraph}$. A similar coupling is
provided by Theorem~3.1 of \cite{AC20b}. However, our
Proposition~\ref{pro:coupling} has several advantages: First, it has a
much simpler proof which is based on the decomposition from
Section~\ref{sec:decomposition}. Second, in its proof we also couple
the underlying $Z$-fields (cf.~Remark~\ref{rem:Zcoupling} below) which will
be important later. And third, in contrast to \cite{AC20b}, we use two
independent fields $\varphi_\treegraph$, $\varphi'_\treegraph$ in its
statement; this will simplify the application of the coupling in the
second moment computation in the proof of
Proposition~\ref{pro:mesoscopic} below.

For the statement recall from Section~\ref{sec:prelim} that $\rho_{x,r}$
denotes a fixed isomorphism of $B_\finitegraph(x,r)$ and $B_\treegraph(\rot,r)$,
if $x\in \vfinitegraph$ is $r$-treelike.

\begin{proposition}
  \label{pro:coupling}
  There are $c,C\in (0,\infty)$ such that for all $n,r\in \mathbb N$,
  and for all $x,x'\in \vfinitegraph$
  which are $2r$-treelike and satisfy
  $B_\finitegraph(x,2r) \cap B_\finitegraph(x',2r) = \emptyset$ there
  exists a coupling $\mathbb Q_n^{x,x'}$ of $\Psi_\finitegraph$ and two
  independent Gaussian free fields $\varphi_\treegraph$,
  $\varphi '_\treegraph$ such that for all $\varepsilon >0$
  \begin{equation}
    \begin{split}
      \label{eqn:coupling}
      \mathbb Q_n^{x,x'} \big[ &\max\{D(x,r),D(x',r)\}  >
        \varepsilon \big] \\
      &\leq C d(d-1)^r\Big(\exp\Big(-\frac{\varepsilon^2 e^{cr}}{18}\Big)+
        \exp\Big(-\frac{\varepsilon^2 N_n}{9(r+1)}\Big)\Big),
    \end{split}
  \end{equation}
  where
  \begin{equation}
    D(x,r) \coloneqq
          \max_{y \in B_\finitegraph(x,r) }
          \abs[\big]{ \Psi_\finitegraph(y) -
            \varphi_\treegraph(\rho_{x,2r}(y))}.
  \end{equation}
\end{proposition}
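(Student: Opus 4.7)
The plan is to decompose both $\Psi_\finitegraph$ (via Proposition~\ref{pro:decomposition}) and the tree fields $\varphi_\treegraph, \varphi'_\treegraph$ using the same construction, and then to couple the underlying i.i.d.\ driving Gaussians so that, inside the tree-like regions around $x$ and $x'$, they are pull-backs of one another under $\rho_{x,2r}$ and $\rho_{x',2r}$. The transient tree $\treegraph$ admits a parallel decomposition: with i.i.d.\ centred Gaussians $(Z^{\mathrm{tree}}_k(\tilde z))$ on $V(\ttreegraph)$ of variance $C_0/\tilde\pi_\treegraph(\tilde z)$ and $\xi^k_\treegraph(y) \coloneqq (\tilde Q^k_\treegraph Z^{\mathrm{tree}}_k)(y)$ (no projection is needed since $\treegraph$ is infinite), a calculation paralleling \eqref{eqn:xicovariance}--\eqref{eqn:covxi}, together with the identity $\sum_k \bar P^\treegraph_x(X_k=y) = 2 g_\treegraph(x,y)$ for the lazy walk on $\treegraph$ and $C_0=d/2$, shows that $\sum_k \xi^k_\treegraph$ has the law of $\varphi_\treegraph$; an independent family produces $\varphi'_\treegraph$.

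The coupling $\mathbb Q_n^{x,x'}$ is built at the level of the Gaussians. Since $x,x'$ are $2r$-treelike with disjoint $2r$-balls, $\rho_{x,2r}$ and $\rho_{x',2r}$ extend canonically to isomorphisms between sufficiently large tree-like subgraphs of $\tfinitegraph$ and $\ttreegraph$. For each $k\in\mathbb N$, let $Z_k^{\mathrm{tree}}$ be identified with $Z_k$ on this tree-like region around $x$ via the isomorphism and equal to fresh i.i.d.\ Gaussians of the correct variances elsewhere; define $Z_k'^{\mathrm{tree}}$ analogously around $x'$, independently of $Z_k^{\mathrm{tree}}$, so that $\varphi_\treegraph$ and $\varphi'_\treegraph$ are independent with the prescribed marginals. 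The key locality observation is that for $y\in B_\finitegraph(x,r)$ and $k\le r$ the quantity $(\tilde Q^k_\finitegraph Z_k)(y)$ depends only on $Z_k$ in $B_{\tfinitegraph}(y,k)$, which lies in the tree-like region around $x$; under the coupling it therefore equals $(\tilde Q^k_\treegraph Z^{\mathrm{tree}}_k)(\rho_{x,2r}(y))=\xi^k_\treegraph(\rho_{x,2r}(y))$. Combining this with the identity $\xi^k_\finitegraph(y)=(\tilde Q^k_\finitegraph Z_k)(y)-c_k$, which follows from the commutation $\Pi_\finitegraph\tilde Q_\finitegraph=\tilde Q_\finitegraph\Pi_\finitegraph$, formula \eqref{eqn:PIexpl} and the bipartite alternation $\vfinitegraph \leftrightarrow\efinitegraph$, we see that the \emph{only} residual discrepancy is the scalar (independent of $y$)
\begin{equation*}
c_k = \begin{cases}\sum_{z\in\vfinitegraph} Z_k(z)\pi_\finitegraph(z), & k \text{ even},\\ |\efinitegraph|^{-1}\sum_{e\in\efinitegraph} Z_k(e), & k \text{ odd},\end{cases}
\end{equation*}
each a centred Gaussian of variance $O(1/N_n)$, independent across $k$.

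Writing $\Psi_\finitegraph(y)-\varphi_\treegraph(\rho_{x,2r}(y))=-\sum_{k\le r}c_k+R(y)$ with $R(y)\coloneqq\sum_{k>r}[\xi^k_\finitegraph(y)-\xi^k_\treegraph(\rho_{x,2r}(y))]$, the first summand is a one-dimensional centred Gaussian of variance $\le C(r+1)/N_n$ that is the same for every $y$, while $R(y)$ has variance $\le Ce^{-cr}$: indeed, Assumption~\ref{assumptions}(c) together with the spectral gap of the lazy walk on $\finitegraph$ yields $|\tilde Q^{2k}_\finitegraph(y,y)-\pi_\finitegraph(y)|\le Ce^{-ck}$, and $\tilde Q^{2k}_\treegraph(y,y)$ decays at least as fast by transience of $\treegraph$. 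Standard Gaussian tail bounds then give
\begin{equation*}
\mathbb Q_n^{x,x'}\bigl(\bigl|{\textstyle\sum_{k\le r}c_k}\bigr|>\tfrac{\varepsilon}{2}\bigr)\le 2\exp\Bigl(-\tfrac{\varepsilon^2 N_n}{C(r+1)}\Bigr),\qquad \mathbb Q_n^{x,x'}\bigl(|R(y)|>\tfrac{\varepsilon}{2}\bigr)\le 2\exp\Bigl(-\tfrac{\varepsilon^2 e^{cr}}{C}\Bigr),
\end{equation*}
and a union bound over the at most $Cd(d-1)^r$ vertices of $B_\finitegraph(x,r)\cup B_\finitegraph(x',r)$ (treating $x'$ symmetrically with $\varphi'_\treegraph$) yields \eqref{eqn:coupling} after absorbing constants. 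The main technical hurdle is the bookkeeping of $\Pi_\finitegraph$, which has no counterpart on the infinite tree; the argument is saved precisely because its net effect reduces to a single $y$-independent scalar $c_k$ of variance $O(1/N_n)$, which is exactly what produces the factor $N_n$ in the second exponential of \eqref{eqn:coupling}.
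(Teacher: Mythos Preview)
Your proposal is correct and follows essentially the same approach as the paper: decompose both $\Psi_\finitegraph$ (via Proposition~\ref{pro:decomposition}) and $\varphi_\treegraph,\varphi'_\treegraph$ (via the analogous tree decomposition, with no projection), couple the underlying i.i.d.\ Gaussians on the tree-like regions around $x$ and $x'$, and bound the residual error by Gaussian tails with variances controlled by the spectral gap and the transience of $\treegraph$. The only cosmetic differences are that you use the cutoff $k\le r$ rather than $k\le 2r$, group the error into two pieces rather than three, and make explicit that $(\Pi_\finitegraph\tilde Q_\finitegraph^k Z_k)(y)$ is a $y$-independent scalar $c_k$ (which in the paper is implicit in \eqref{eqn:PiQk}); none of this changes the argument.
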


\begin{proof}
  We use the decomposition of $\Psi_\finitegraph$ from
  Section~\ref{sec:decomposition} and a corresponding decomposition of
  $\varphi_{\treegraph}$. Using the notation of
  Section~\ref{sec:decomposition}, let
  $ \tvfinitegraph\coloneqq V(\tfinitegraph)$, and let
  $Z = (Z_k(\tilde x))_{k\in \mathbb N, x\in \tvfinitegraph}$ be a
  collection of independent Gaussian random variables on some probability
  space $(\Omega ,\mathcal A, \mathbb Q_n^{x,x'})$ with variances
  (cf.~\eqref{eqn:genZvariance}, \eqref{eqn:tildepi}, we take $C_0 = d/2$
    as explained below~\eqref{eqn:generalzerogff})
  \begin{equation}
    \label{eqn:Zvariances}
    \Var Z_k(\tilde x) = \begin{cases}
      \frac 12,\qquad& \text{if } \tilde x\in \mathcal V_n,\\
      \frac d4,\qquad& \text{if } \tilde x\in \mathcal E_n.
    \end{cases}
  \end{equation}
  Set
  $\xi_\finitegraph^k(x) \coloneqq \big((\Id - \Pi_\finitegraph) \tilde
    Q_\finitegraph^k Z_k\big)(x)$ and
  \begin{equation}
    \label{eqn:represenationPsi}
    \Psi_{\mathcal G_n}(x) \coloneqq \sum_{k\ge 0} \xi_\finitegraph^k(x) =
    \sum_{k \ge 0}\big((\Id - \Pi_\finitegraph) \tilde Q_\finitegraph^k Z_k\big)(x).
  \end{equation}
  By Proposition~\ref{pro:decomposition}, $\Psi_{\finitegraph}$ has the law of
  zero-average Gaussian free field.

  We now introduce an analogous decomposition for the field $\varphi_\treegraph$,
  similarly to \cite[Lemma~3.1]{DGRS20}.
  Let $\ttreegraph$ be a graph obtained from
  $\mathbb T_d$ by adding a vertex to the middle of every edge, and let
  $\mathtt Z =(\mathtt Z_k(\tilde x))_{k\in \mathbb N,\tilde x \in V(\ttreegraph)}$
  be a collection of independent Gaussian random
  variables on the same probability space
  $(\Omega , \mathcal A, \mathbb Q_n^{x,x'})$ such that (cf.~\eqref{eqn:Zvariances})
  \begin{equation}
    \label{eqn:ttZvariances}
    \Var \mathtt Z_k(\tilde x) = \begin{cases}
      \frac 12,\qquad& \text{if } \tilde x\in  V(\treegraph),\\
      \frac d4,\qquad& \text{if } \tilde x\in  V(\ttreegraph)\setminus
      V(\treegraph).
    \end{cases}
  \end{equation}
  Denoting $\tilde{\mathtt Q}$ the transition matrix of the usual simple
  random walk on $\tilde{\mathbb T}_d$, we set
  $\zeta^k (x) \coloneqq (\tilde{\mathtt Q}^k\mathtt Z_k)(x)$ and
  \begin{equation}
    \label{eqn:represenationphi}
    \varphi_\treegraph(x) \coloneqq \sum_{k\ge 0} \zeta^k(x) =
    \sum_{k \ge 0}( \tilde{\mathtt Q}^k \mathtt Z_k)(x).
  \end{equation}
  Then $\varphi_\treegraph$ is a Gaussian free field on $\treegraph$.
  This can be shown by a straightforward adaptation of the proof for the
  Gaussian free field on $\mathbb Z^d$ from \cite{DGRS20} (or by adapting
    the proof of Proposition~\ref{pro:decomposition}, leaving out all
    terms involving the projection
    $\Pi_G$). By introducing an independent copy
  $\mathtt Z ' = (\mathtt Z'_k(x))_{k\in \mathbb N,x\in V(\ttreegraph)}$
  of $\mathtt Z$, we further define the field $\varphi '_\treegraph$ by
  a formula analogous to \eqref{eqn:represenationphi}, with $\mathtt Z'$ instead of
  $\mathtt Z$.

  Let
  $\tilde \rho_{x,2r}:B_{\tfinitegraph}(x,4r) \to B_\ttreegraph(\rot,4r)$
  be the natural extension of the isomorphism $\rho_{x,2r}$ to the balls
  in graphs $\tfinitegraph$ and $\ttreegraph$. (Note that the ball
    $B_\tfinitegraph(x,4r)$ is related to $B_\finitegraph(x,2r)$, since
    in $\tfinitegraph$ there are additional vertices in the middle of
    every edge of $\finitegraph$.) We now require that under
  $\mathbb Q_n^{x,x'}$ the underlying fields $Z$, $\mathtt Z$, and
  $\mathtt Z'$ satisfy the following equalities while otherwise being
  independent:
  \begin{equation}
    \begin{split}
      \label{eqn:Zequalities}
      \mathtt Z_k (\tilde \rho_{x,2r}(\tilde y)) &= Z_k(\tilde y)
      \qquad \text{for every }k\le 2r, \tilde y\in B_{\tfinitegraph}(x,4r),\\
      \mathtt Z'_k (\tilde \rho_{x',2r}(\tilde y)) &= Z_k(\tilde y)
      \qquad \text{for every }k\le 2r, \tilde y\in B_{\tfinitegraph}(x',4r).
    \end{split}
  \end{equation}
  Observe that this can be done without changing the distribution of $Z$,
  $\mathtt Z$ and $\mathtt Z'$, in particular the assumption
  $B_\finitegraph(x,2r)\cap B_\finitegraph(x',2r)=\emptyset$ is necessary
  for the independence of $\mathtt Z$ and $\mathtt Z'$. The assumption
  that $x$ is $2r$-treelike implies that the law of the image by
  $\tilde\rho_{x,2r}$ of the random walk on $\tfinitegraph$ started in
  $\tilde y\in B_\tfinitegraph(x,2r)$ and stopped on exiting
  $B_{\tfinitegraph}(x,4r)$ agrees with the law of the random walk on
  $\ttreegraph$ started in $\tilde\rho_{x,2r}(\tilde y)$ and stopped on
  exiting $B_{\ttreegraph}(\rot,4r)$, and that this random walk makes at
  least $2r$ steps before being stopped. As consequence the corresponding
  transition probabilities agree in the sense of
  \begin{equation}
    \label{eqn:sametransition}
    \begin{split}
      &\tilde Q_\finitegraph^k(\tilde y, \tilde y')
      = \tilde {\mathtt Q}^k (\tilde \rho_{x,2r}(\tilde y),\tilde
        \rho_{x,2r}(\tilde
          y'))
      \\&\text{for } \tilde y\in B_{\tfinitegraph}(x,2r),
      \tilde y' \in B_{\tfinitegraph}(x,4r),
      k\le 2r.
    \end{split}
  \end{equation}
  From \eqref{eqn:represenationPsi} and
  \eqref{eqn:represenationphi}--\eqref{eqn:sametransition}
  it follows that for every $y\in  B_\finitegraph(x,r)$
  \begin{equation}
    \label{eqn:coupling_approx}
    \begin{split}
      \Psi_\finitegraph(y) -{}& \varphi_\treegraph(\rho_{x,2r}(y))
      \\&=\sum_{k>  2r } \xi_\finitegraph^k(y)
      - \sum_{0\le k \le 2r} (\Pi_\finitegraph  \tilde Q_\finitegraph^k Z_k)(y)
      - \sum_{k>2r} \zeta^k (\rho_{x,2r}(y)),
    \end{split}
  \end{equation}
  and a similar equality holds when $x$ is replaced by $x'$ and
  $\zeta^k $ by $\zeta^{\prime k} \coloneqq \tilde {\mathtt Q}^k \mathtt Z_k'$.

  We now estimate the three sums on the right-hand side of
  \eqref{eqn:coupling_approx}. For the last one, we claim that there is a
  constant $c >0$ such that for every $\varepsilon >0 $, $k_0\ge 1$, and
  $y\in V(\treegraph)$,
  \begin{equation}
    \label{eqn:suma}
    \mathbb Q_n^{x,x'}\Big(
      \abs[\Big]{\sum_{k> k_0} \zeta^k(y)} \ge \frac \varepsilon3\Big)
    \le 2  \exp \Big(-\frac {\varepsilon^2 e^{ck_0}}{18}\Big) .
  \end{equation}
  Indeed, observe that $\zeta^k(y)$, $k\ge 0$, are independent Gaussian
  random variables with
  $\Var \zeta^k(y)   =  \tilde {\mathtt Q}^{2k} (y,y)/2$ (which can be
    proved by a similar
    computation as in \eqref{eqn:xicovariance}, recalling $C_0=d/2$).
  Therefore,
  \begin{equation}
    \Var \Big(\sum_{k> k_0}\zeta^k(y)\Big)
    = \frac 12 \sum_{k> k_0} \tilde {\mathtt Q}^{2k} (y,y)
    \le  e^{-c k_0},
  \end{equation}
  where we used the fact that the lazy random walk $(X_k)_{k\ge 0}$ on $\treegraph$ satisfies
  $\tilde {\mathtt Q}^{2k} (y,y)\le e^{-c k}$, which can easily be
  proved by observing that $d_\treegraph(\rot, X_n)$ is a random walk on
  $\mathbb N$ with a drift pointing away from $0$.
  Claim \eqref{eqn:suma} then follows by the usual Gaussian tail
  estimates.

  We proceed similarly for the first sum in \eqref{eqn:coupling_approx}.
  Using \eqref{eqn:covxi} and the standard
  estimate on the convergence to stationarity for finite Markov chains
  (see e.g.~\cite[(12.11), p.155]{LPW09}),
  \begin{equation}
    \label{eqn:mchconv}
    \Var \xi_\finitegraph^k(x)
    \refoverseteq{eqn:covxi} \frac 12 \Big(\tilde
      Q_\finitegraph^{2k}(x,x)-\frac 1 {N_n}\Big) \le e^{-
      \lambda_{\finitegraph}k},
  \end{equation}
  where $\lambda_\finitegraph \ge \beta $ is the spectral gap appearing in
  Assumption~\ref{assumptions}(c) (due to the laziness, there is no $2$ in
    the exponent). Therefore,
  for every $\varepsilon >0$, $k_0\ge 1$, and $y\in \vfinitegraph$,
  \begin{equation}
    \label{eqn:sumb}
    \mathbb Q_n^{x,x'}\Big(
      \abs[\Big]{\sum_{k>  k_0} \xi_\finitegraph^k(y)} \ge \frac \varepsilon 3 \Big)
    \le 2 \exp \Big(-\frac { \varepsilon^2 e^{\beta k_0}}{18}\Big) .
  \end{equation}

  Finally, for the second sum in \eqref{eqn:coupling_approx}, we claim that
  for every $\varepsilon >0$, $k_0\ge 1$ and $y\in \vfinitegraph$,
  \begin{equation}
    \label{eqn:sumc}
    \mathbb Q_n^{x,x'}\Big(
      \abs[\Big]{\sum_{0\le k\le  k_0}
        \sum_{\tilde z\in \tvfinitegraph}
        (\Pi_\finitegraph \tilde Q_\finitegraph^k)(y,\tilde z) Z_k(\tilde z) }
      \ge \frac \varepsilon 3 \Big)
    \le 2  \exp \Big(-  \frac{\varepsilon^2 N_n} {9 (k_0+1)}\Big).
  \end{equation}
  Indeed, by the same computation as in \eqref{eqn:xicovariance}--\eqref{eqn:PiQk},
  \begin{equation}
    \Var \Big( \sum_{0\le k\le k_0}
      \sum_{\tilde z\in \tvfinitegraph}
      (\Pi_\finitegraph \tilde Q_\finitegraph^k)(y,\tilde z) Z_k(\tilde z)\Big)
    = \tfrac 12 (k_0+1) \pi_\finitegraph (y) = \tfrac {k_0+1} {2 N_n},
  \end{equation}
  this follows by the same reasoning as above.

  Claim \eqref{eqn:coupling} then follows from
  \eqref{eqn:coupling_approx}, \eqref{eqn:suma}, \eqref{eqn:sumb}, and
  \eqref{eqn:sumc}
  using the triangle inequality, a union bound, and the fact that
  $\abs{B_{\treegraph}(\rot,r)} = \abs{B_{\finitegraph}(x,r)}
  =\frac{d(d-1)^r-2}{d-2} \le d(d-1)^r$,
  if $x$ is $2r$-treelike.
\end{proof}

\begin{remark}
  \label{rem:Zcoupling}
  Later, it will play the key role that the coupling $\mathbb Q_n^{x,x'}$ also couples
  the underlying $Z$-fields. In particular, we will use that
  $\mathbb Q_n^{x,x'}$-a.s.
  \begin{equation}
    \label{eqn:Z0coupling}
    \begin{split}
      Z_0(y) &= \mathtt Z_0(\rho_{x,2r}(y) ),
      \qquad y\in B_\finitegraph(x,2r),
      \\
      Z_0(y) &= \mathtt Z'_0(\rho_{x',2r}(y) ),
      \qquad y\in B_\finitegraph(x',2r) .
    \end{split}
  \end{equation}
  which follows directly from \eqref{eqn:Zequalities}.
\end{remark}

\section{Robust components of the GFF on the tree} 
\label{sec:robust}

In what follows, we assume that $\Psi_{\finitegraph}$,
$\varphi_\treegraph$, and  the underlying fields $Z$, $\mathtt Z$
are constructed on some probability space $(\Omega , \mathcal A, P)$ and
\eqref{eqn:represenationPsi}, \eqref{eqn:represenationphi} hold. As
explained in the introduction (cf.~\eqref{eqn:introsprinkling}), in the
sprinkling construction we will write $\Psi_\finitegraph$ as a sum of two
independent fields $\Psi^1_\finitegraph$ and $ \Psi^2_\finitegraph$. To
this end, let $t\in (0,1)$ be a parameter which will later depend on $n$, and
write  $Z_0= \sqrt {1-t^2} Z_0^1 + t Z_0^2$, where
$Z_0^i= (Z_0^i(x):x\in \tvfinitegraph)$, $i\in \{1,2\}$,
are two independent copies of
$Z_0$. Similarly as above
\eqref{eqn:represenationPsi}, we define
\begin{equation}
  \xi_\finitegraph^{0,i}(x):= (\Id - \Pi_\finitegraph)Z_0^i(x)
  = Z_0^i(x) -\frac 1 {N_n} \sum_{y\in
    \vfinitegraph} Z^i_0(y), \quad x\in \vfinitegraph, i\in\{1,2\},
\end{equation}
(where in the second equality we used \eqref{eqn:PIexpl} and
  $\pi_\finitegraph(x) = 1/N_n$),  and set
\begin{equation}
  \label{eqn:Psionetwo}
  \Psi^1_\finitegraph (x) \coloneqq \sqrt{1-t^2} \xi_\finitegraph^{0,1}(x)
  + \sum_{k\ge 1}\xi_\finitegraph^k (x)
    \qquad\text{and}\qquad
    \Psi^2_\finitegraph (x) \coloneqq t \xi_\finitegraph^{0,2}(x) .
\end{equation}
Then $\Psi_\finitegraph=\Psi^1_\finitegraph + \Psi^2_\finitegraph$, and
$\Psi^1_\finitegraph$,  $\Psi^2_\finitegraph$ are independent.

Next, we introduce two independent copies $\mathtt Z_0^1$,
$\mathtt Z_0^2$ of  $\mathtt Z_0$ so that
$\mathtt Z_0 = \sqrt {1-t^2}\mathtt Z^1_0+ t\mathtt Z^2_0$, and define
(cf.~\eqref{eqn:represenationphi})
\begin{equation}
  \label{eqn:phionetwo}
  \varphi^1_\treegraph (x) \coloneqq
  \sqrt{1-t^2} \mathtt Z_0^1(x) + \sum_{k\ge 1}\zeta^k(x)
  \qquad\text{and}\qquad
  \varphi^2_\treegraph (x) \coloneqq  t \mathtt Z^2_0(x).
\end{equation}
Then $\varphi_\treegraph = \varphi^1_\treegraph + \varphi^2_\treegraph$
and the summands are independent.

The goal of the next three sections is to show that the supercritical
level sets of $\varphi^1_{\treegraph}$, and as consequence also of
$\Psi^1_\finitegraph$, have large connected components. Unfortunately,  we
cannot apply the results of \cite{AC20a, AC20b} directly, because
$\Psi^1_\finitegraph$ and $\varphi^1_\treegraph$ are no longer Gaussian
free fields.  In this section, we thus show that for any $h<h_\star$ the
level set $E^{\ge h}(\varphi_{\treegraph})$ of the unmodified field
$\varphi_\treegraph$ has infinite components which are robust to certain
perturbations. In the next section, we use this result to show that the
level set $E^{\ge h}(\varphi^1_\treegraph)$ percolates if $h<h_\star$ and
$t$ is small enough.
Finally, in Section~\ref{sec:mesoscopic}, we transfer these result to the
field $\Psi^1_\finitegraph$, using the coupling from
Section~\ref{sec:coupling}.

For the sprinkling construction of Section~\ref{sec:sprinkling}, we need
to consider two types of perturbations of $E^{\ge h}(\varphi_\treegraph)$.
The first one comes from the field $\varphi^2_\treegraph$, as already
explained, and the second one from an independent Bernoulli percolation.
For the latter, let $\iota = (\iota (x))_{x\in V(\treegraph)}$ be
i.i.d.~Bernoulli random variables with $P({\iota (x) =1}) =p$ which are
independent of $\mathtt Z$ and thus of $\varphi_\treegraph$. The
robustness against the perturbation by $\varphi_\treegraph^2$ involves
certain averaging property for $\varphi_{\treegraph}$ and is driven by a
parameter $\gamma  \in [-\infty,0]$. Formally, for $x\in V(\treegraph)$
(recalling that $\children(x)$ is the set of direct descendants of $x$
  in~$\treegraph$), let $\mathcal K(h,p,\gamma )$ be the set of
\emph{robust} vertices in $E^{\ge h}(\varphi_\treegraph)$ defined by
\begin{equation}
  \label{eqn:robust}
  \mathcal K(h,p,\gamma) \coloneqq \Big\{x\in \vfinitegraph:
    \varphi_{\treegraph}(x) \ge h, \iota (x) = 1, \text{ and }
    \sum_{y\in \children(x)} \varphi_{\treegraph}(y) \ge \gamma \Big\},
\end{equation}
and let $\mathcal C_\rot^{h,p,\gamma }$ be the connected component of
$\mathcal K(h,p,\gamma )$ containing the root $\rot$. Note
that if $p=1$ and $\gamma = -\infty$, then $\mathcal C_\rot^{h,p,\gamma }$
agrees with the connected component $\mathcal C^{h}_\rot$ of the level set
$E^{\ge h}(\varphi_\treegraph)$ containing the root $\rot$. We set
\begin{align}
  \label{eqn:etahpg}
  \eta(h,p,\gamma ) &\coloneqq P (\abs {\mathcal C_\rot^{h,p,\gamma } } =
    \infty ),
  \\
  \label{eqn:calS}
  \mathcal S &\coloneqq \{(h,p,\gamma )\in \mathbb R\times[0,1]\times [-\infty,0]:
    \eta (h,p,\gamma ) >0\}.
\end{align}

The main result of this section is the following proposition which shows
that, in the supercritical regime, $\mathcal C_\rot^{h,p,\gamma }$ has
similar properties as $\mathcal C_\rot^{h}$, cf.~\cite[Theorems~5.1,
  5.3]{AC20a} or \cite[(2.14), (2.16)]{AC20b}.

\begin{proposition}
  \begin{enumerate}
    \item
      If $(h,p,\gamma )\in \mathcal S$  and  $h'<h$, $p'>p$,
      $\gamma'<\gamma$,  then also $(h',p',\gamma ')\in \mathcal S$. Moreover,
      for every $h<h_\star$ there is $p<1$ and $\gamma >-\infty$ such that
      $(h,p,\gamma )\in \mathcal S$.

    \item For every $(h,p,\gamma )$ in the interior $\mathcal S^0$ of
      $\mathcal S$ there is $\lambda_h^{p,\gamma }>1$ such that
      \begin{equation}
        \lim_{k\to \infty}
        P\big(\abs{\mathcal C_\rot^{h,p,\gamma }\cap
            S_\treegraph(\rot,k)}
          \ge (\lambda_h^{p,\gamma })^k/k^2\big) = \eta(h,p,\gamma ) > 0.
      \end{equation}

    \item
    The functions $(h,p,\gamma ) \mapsto \lambda_h^{p,\gamma }$ and
    $(h,p,\gamma )\mapsto \eta (h,p,\gamma )$ are continuous on~
    $\mathcal S^0$ (this includes the continuity at points
      $(h,p,-\infty)\in \mathcal S^0$).
  \end{enumerate}
  \label{pro:robust}
\end{proposition}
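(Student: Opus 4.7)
The plan is to extend the multi-type branching process (MTBP) framework from \cite{AC20a, AC20b}, which was used to analyse $\mathcal C_\rot^h$, so as to accommodate the additional constraints defining $\mathcal C_\rot^{h,p,\gamma}$. Exploiting the Markov property of $\varphi_\treegraph$ on $\treegraph$---conditional on $\varphi_\treegraph(x)$ the values $(\varphi_\treegraph(y))_{y\in\children(x)}$ are independent with a distribution depending only on $\varphi_\treegraph(x)$---I would associate to each vertex $x$ a ``type'' essentially given by $\varphi_\treegraph(x)$. The level-set condition $\varphi_\treegraph(x)\ge h$ and the independent Bernoulli thinning $\iota(x)=1$ fit the classical MTBP template directly; the children-sum condition $\sum_{y\in\children(x)}\varphi_\treegraph(y)\ge \gamma$ is more delicate, since it correlates siblings of $x$ and must be read off the next generation. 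I would encode it by declaring the edge from $x$ to a child $y$ to be ``active'' iff $\varphi_\treegraph(y)\ge h$, $\iota(y)=1$, and the aggregate sum-constraint at $x$ holds; this depends measurably on the offspring types and thus yields a well-defined MTBP with mean kernel $M_{h,p,\gamma}$.

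For part~(a), the monotonicity of $\mathcal S$ is immediate from a coordinate-wise coupling of the Bernoulli fields $\iota^{(p)}$: the set $\mathcal K(h,p,\gamma)$ is monotone in $(-h,p,-\gamma)$, and hence so are $\mathcal C_\rot^{h,p,\gamma}$ and $\eta(h,p,\gamma)$. For the non-emptiness claim, fix $h<h''<h_\star$; then $(h'',1,-\infty)\in\mathcal S$ and the corresponding MTBP has Perron--Frobenius eigenvalue strictly larger than $1$. Continuity of the kernel $M_{h'',p,\gamma}$ in $(p,\gamma)$, together with standard eigenvalue perturbation, gives $\lambda_{h''}^{p,\gamma}>1$ for $p$ close to $1$ and $\gamma$ sufficiently negative, whence $\eta(h'',p,\gamma)>0$ and, by the monotonicity already established, $\eta(h,p,\gamma)>0$. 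Part~(b) is the standard Kesten--Stigum-type lower bound for supercritical MTBPs: on $\mathcal S^0$ the Perron eigenvalue $\lambda_h^{p,\gamma}$ of $M_{h,p,\gamma}$ is strictly greater than~$1$, and a second-moment argument for $|\mathcal C_\rot^{h,p,\gamma}\cap S_\treegraph(\rot,k)|$ combined with the concentration estimates used in the proof of Theorem~5.3 of \cite{AC20a} yields the claim, with the factor $k^{-2}$ absorbing fluctuations of the Malthusian martingale.

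Part~(c) rests on the observation that the offspring kernel $M_{h,p,\gamma}$ depends continuously on its parameters, the value $\gamma=-\infty$ being interpreted as a weak limit in which the sum-constraint becomes vacuous. Continuity of $\lambda_h^{p,\gamma}$ then follows from standard perturbation theory for the principal eigenvalue of a Perron-type operator, and continuity of $\eta$ follows from the fact that it is the largest fixed point in $[0,1]$ of the type-dependent generating function associated with the MTBP, which varies continuously in $(h,p,\gamma)$ and has a stable non-trivial fixed point whenever $\lambda>1$. The main obstacle I anticipate is precisely the handling of the children-sum condition: it breaks the product form of classical MTBP offspring distributions, so the machinery of \cite{AC20a,AC20b} must be re-run with an enlarged type (or with a reformulated life/death rule in which $x$ is declared alive only upon inspecting its offspring). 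Verifying that the continuity and eigenvalue-perturbation arguments remain valid under this reformulation---and in particular that everything behaves continuously as $\gamma\to-\infty$ uniformly in $(h,p)$ near an interior point of $\mathcal S$---is likely to be the most technical step.
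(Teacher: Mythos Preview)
Your proposal is correct and follows essentially the same approach as the paper: reformulate $\mathcal C_\rot^{h,p,\gamma}$ as a multi-type branching process by delaying the sum-constraint check to the offspring (the paper does this via an auxiliary cluster $\bar{\mathcal C}_\rot^{h,p,\gamma}$), define the associated mean operator $L_h^{p,\gamma}$, and then rerun the spectral and fixed-point arguments of \cite{Szn16,AC20a}. The one technical point the paper stresses that you should not underestimate is that, unlike $L_h$, the operator $L_h^{p,\gamma}$ is \emph{not} self-adjoint in $L^2(\nu)$ once the sum-constraint is present, so the existence of a simple positive principal eigenvalue with non-negative eigenfunction (your ``Perron--Frobenius eigenvalue'') requires an ad hoc argument via compactness and positivity rather than a direct appeal to self-adjoint spectral theory.
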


\begin{remark}
  We expect that $\mathcal S$ is open, that is $\mathcal S^0 = \mathcal S$. Proving this would require
  to study the critical behaviour of $\mathcal C_\rot^{h,p,\gamma }$ which
  goes beyond the scope of this paper.
\end{remark}

\begin{proof}[Proof of Proposition~\ref{pro:robust}]
  The proof uses multi-type branching process techniques and is a
  modification of the arguments given in Section~3 of \cite{Szn16} and in
  Sections~\hbox{3--5} of \cite{AC20a}. Here, we only explain how these
  arguments should be adapted to our setting and leave out the parts that
  are relatively standard in the context of the multi-type branching
  processes.

  We first recall the recursive construction of $\varphi_{\treegraph}$
  from \cite[Section 2.1]{AC20a}. Define  random variables
  \begin{equation}
    \label{eqn:recconstfirt}
    \begin{split}
      Y_\rot &\coloneqq \varphi_\treegraph (\rot)
      \\Y_x &\coloneqq \varphi_\treegraph(x)-\frac 1 {d-1}
      \varphi_\treegraph (\anc( x)),
      \qquad\text{for }x \in V(\treegraph) \setminus \{ \rot \}.
    \end{split}
  \end{equation}
  Then, by the domain Markov property of  $\varphi_\treegraph$,
  see \cite[(2.6),(2.7)]{AC20a}, $(Y_x)_{x\in V(\treegraph)}$ are
  independent random variables such that
  $Y_\rot \sim   \mathcal N(0,\frac{d-1}{d-2})$ and
  $Y_x\sim  \mathcal N(0,  \frac{d}{d-1})$ for $x\neq \rot$.

  The
  definition \eqref{eqn:recconstfirt} can be written as
  $\varphi_\treegraph(\rot) = Y_\rot$ and
  \begin{equation}
    \label{eqn:recconst}
    \varphi_\treegraph(x) = \frac 1{d-1} \varphi_\treegraph(\anc(x)) + Y_x
    \quad\text{for }x \in V(\treegraph) \setminus \{ \rot \}.
  \end{equation}
  The field $\varphi_\treegraph$ is thus determined by $Y_x$'s, by
  applying \eqref{eqn:recconst} recursively. This also implies
  that $\varphi_{\treegraph}$ can be viewed as a multi-type branching
  process. Indeed, we can view every $x\in S_\treegraph(\rot,k)$ as an
  individual in the $k$-th generation of the branching process with an
  attached type $\varphi_\treegraph(x)\in \mathbb R$.
  \eqref{eqn:recconst} can be then rephrased as: every individual $x$ has
  $d-1$ children ($d$ children if $x=\rot$) whose types, conditionally on
  $\varphi_\treegraph(x)$, are chosen independently according to the
  normal distribution
  $\mathcal N(\frac 1{d-1}\varphi_\treegraph(x),\frac{d}{d-1})$. This
  point of view can easily be adapted to $\mathcal C_\rot^{h}$,
  namely by considering the same multi-type branching process but
  instantly killing all individuals whose type does not exceed $h$.
  Relying on this point of view, \cite{AC20a} investigates the properties of
  $\mathcal C_\rot^h$ using branching process techniques.

  We now modify this construction to  apply to
  $\mathcal C_\rot^{h,p,\gamma }$. In addition to instantly killing the
  individuals whose type does not exceed $h$, we also kill individuals $x$
  for which $\iota (x)=0$, and we kill all direct descendants of $x$ if
  $\sum_{y\in \children(x)} \varphi_{\treegraph}(y) < \gamma $. Then
  the surviving individuals form a component
  $\bar {\mathcal  C}_\rot^{h,p,\gamma }$ which is slightly larger than
  $\mathcal C_\rot^{h,p,\gamma}$. More precisely, since we only kill the
  direct descendants of non-robust vertices, and not those vertices
  themselves,
  \begin{equation}
    \bar {\mathcal C}_\rot^{h,p,\gamma } = \mathcal C_\rot^{h,p,\gamma} \cup
    \Big\{x\in \partial_\treegraph \mathcal C_\rot^{h,p,\gamma} :
      \varphi_\treegraph(x) \ge h, \iota(x) =1, \sum_{y\in
        \children (x)}\varphi_{\treegraph}(y)\overset {(!)}< \gamma \Big\}.
  \end{equation}
  As consequence, $\abs{\bar {\mathcal C}_\rot^{h,p,\gamma }} = \infty$
  iff $\abs{{\mathcal C}_\rot^{h,p,\gamma }} = \infty$, and
  $\abs{\bar {\mathcal C}_\rot^{h,p,\gamma } \cap S_\treegraph(\rot, k)} \ge a$
  implies that
  $\abs{\mathcal C_\rot^{h,p,\gamma } \cap S_\treegraph(\rot, k-1)} \ge a/(d-1)$.
  Hence, it is sufficient to show claims (b,c) for
  $\bar {\mathcal C}_\rot^{h,p,\gamma}$ instead of
  $\mathcal C_\rot^{h,p,\gamma }$ (with an additional constant $(d-1)$).
  The advantage of the former is that it can be interpreted as a
  multi-type branching process.

  The key role in the investigations of \cite{AC20a} plays certain operator
  introduced in \cite{Szn16} in order to give a  spectral characterisation
  of the critical value $h_\star$. This operator is defined as follows, cf.
  \cite[Section 2.2]{AC20a}: Let $\nu $ be the centred Gaussian measure
  of variance $\frac{d-1}{d-2}$. For $h\in \mathbb R$,
  $f\in L^2(\nu )$ and $a\in \mathbb R$, set
  \begin{equation}
    \label{eqn:defLh}
      (L_h f) (a) \coloneqq (d-1) \bbone_{[h,\infty)}(a) \,
      E^Y \big[ (f \bbone_{[h,\infty)})(\tfrac{a}{d-1} + Y )
            \big],
  \end{equation}
  where $Y \sim \mathcal N(0,\tfrac{d}{d-1})$ and $E^Y$ is the
  expectation with respect to $Y$. The operator $L_h$ is the `mean value'
  operator corresponding to $\mathcal C_\rot^h$ when it is viewed as a multi-type
  branching process, more precisely, for any $x\neq \rot$ and $a\ge h$,
  \begin{equation}
    (L_h f) (a) =  E\Big[\sum_{y \in \mathcal C_{\rot}^{h}
        \cap \children(x)} f(\varphi_\treegraph(y)) \,\Big |\,
      \varphi_\treegraph(x) = a, x\in \mathcal C_\rot^h \Big].
  \end{equation}
  Denoting $\lambda_h$ the largest eigenvalue of $L_h$, the critical point
  $h_\star$ is given as the unique solution of the equation
  $\lambda_h = 1$, see \cite[Proposition~3.3]{Szn16}

  For $\bar {\mathcal  C}_{\rot}^{h,p,\gamma }$, the corresponding
  operator has a similar, slightly more complicated, form: For
  $f\in L^2(\nu )$ and $a\in \mathbb R$ (and for $x\neq \rot$,
    $a\ge h$ in the formula on the right-hand side of the first line,
    which is only included to motivate the definition),
  \begin{equation}
    \label{eqn:defLhpg}
    \begin{split}
      (L_h^{p,\gamma } &f) (a)
      =  E \bigg[\sum_{x \in \bar{\mathcal
            C}_{\rot}^{h,p,\gamma }
          \cap\children (x)} f(\varphi_\treegraph(y)) \, \bigg |\,
        \varphi_\treegraph(x)=a, x \in \bar{\mathcal C}_\rot^{h,p,\gamma }\bigg]
      \\& \coloneqq p\bbone_{[h,\infty)}(a)
      E^Y \bigg[
        \bbone_{[\gamma ,\infty)}\Big(\sum_{i=1}^{d-1}
          (\tfrac a{d-1} + Y_i)\Big)
        \sum_{i=1}^{d-1}
        (f\bbone_{[h,\infty)}) (\tfrac{a}{d-1} + Y_i ) \bigg],
      \\& = p(d-1)\bbone_{[h,\infty)}(a)
      E^Y \bigg[
        \bbone_{[\gamma ,\infty)}\Big(\sum_{i=1}^{d-1}
          (\tfrac a{d-1} + Y_i)\Big)
        (f\bbone_{[h,\infty)}) (\tfrac{a}{d-1} + Y_1 ) \bigg],
    \end{split}
  \end{equation}
  where $(Y_i)_{i=1,\dots,d-1}$ are i.i.d.~$\mathcal N(0,\tfrac{d}{d-1})$
  and $E^Y$ is the corresponding expectation. Note that
  $L_h = L_h^{1,-\infty}$.

  Contrary to $L_h$, the operator $L_h^{p,\gamma }$ is not self-adjoint
  in $L^2(\nu )$. We thus need an additional argument to show that
  (cf.~\cite[Proposition~3.1]{Szn16}):
  \begin{equation}
    \label{eqn:eigenexistence}
    \parbox{13cm}{The value
      $\lambda_h^{p,\gamma }\coloneqq \norm{L_h^{p,\gamma}}_{L^2(\nu )} =
      \sup\{\langle  g, L_h^{p,\gamma } g \rangle_{L^2(\nu )}:
        \norm{g}_{L^2(\nu )}=1\}$ is a simple eigenvalue of
      $L_h^{p,\gamma }$. Moreover, there is a unique, non-negative eigenfunction
      $\chi_h^{p,\gamma }\in L^2(\nu )$ of $L_h^{p,\gamma }$ corresponding
      to $\lambda_h^{p,\gamma }$ with
      $\norm{\chi_h^{p,\gamma }}_{L^2(\nu )}=1$.
      }
  \end{equation}

  To show this, observe first that from \eqref{eqn:defLh},
  \eqref{eqn:defLhpg} it follows that there exist functions
  $K_h, K_h^{p,\gamma }: [h,\infty)^2 \to (0,\infty)$ such that, for $a\ge h$ (which is the relevant
    range since $L_h f(a) =L_h^{p,\gamma } f(a) = 0$ for $a<h$),
  \begin{equation}
    \label{eqn:kernels}
    \begin{split}
      (L_h f)(a) &= \int_{[h,\infty)}  K_h(a,y) f(y) \nu (\D y), \quad
      \\
      (L_h^{p,\gamma } f)(a) &= \int_{[h,\infty)} K_h^{p,\gamma }(a,y) f(y)
      \nu (\D y).
    \end{split}
  \end{equation}
  Moreover, $K_h^{p,\gamma }\le K_h$ for all admissible values of $h$, $p$,
  and $\gamma $. Since $L_h$ is a Hilbert-Schmidt operator  on
  $L^2(\nu )$ (see \cite[(3.16)]{Szn16}), it follows that
  $L_h^{p,\gamma }$ is a Hilbert-Schmidt and thus compact operator on
  $L^2(\nu )$ as
  well.  By Riesz-Schauder theorem (see e.g.~\cite[Theorem~6.15]{RS80}),
  every $\lambda \neq 0$ in the spectrum $\sigma (L_h^{p,\gamma })$ of
  $L_h^{p,\gamma }$ is an eigenvalue of $L_h^{p,\gamma}$ and $0$ is the
  only possible limit point of $\sigma (L_{h}^{p,\gamma })$.
  Since
  $\lambda_h^{p,\gamma } = \norm{L_h^{p,\gamma }}_{L^2(\nu )}
  = \sup\{\abs{\lambda }: \lambda  \in \sigma (L_h^{p,\gamma })\}$,
  it follows that there
  is $\lambda \in \mathbb C $ with
  $\abs{\lambda } = \lambda_{h}^{p,\gamma }$ such that
  $L_h^{p,\gamma } \chi= \lambda  \chi $ for some (possibly complex
    valued) $\chi \in L^2(\nu )$ with $\norm{\chi }_{L^2(\nu )}=1$.

  Next, we verify that $\lambda > 0$ and $\chi \ge 0$. If $\chi$
  is not of the form $\chi = \beta g $ for some $\beta \in \mathbb C$
  and a real-valued non-negative function $g$, then
  $\norm{\abs\chi}_{L^2(\nu )}=1$ and
  $\langle \abs\chi, L_h^{p,\gamma } \abs\chi\rangle_{L^2(\nu )} > \lambda_h^{p,\gamma }$
  which leads to contradiction with the definition of
  $\lambda_h^{p,\gamma }$ in \eqref{eqn:eigenexistence}. Hence, $\chi = \beta g$. Since
  the multiplication by scalars preserves eigenfunctions, we can assume that
  $\beta  =1$, that is $\chi =g $ is non-negative as required.
  The equality $L_h^{p,\gamma } \chi = \lambda \chi$ then implies that
  $\lambda >0$ as well, and thus $\lambda  = \lambda_h^{p,\gamma }$.

  To finish the proof of \eqref{eqn:eigenexistence}, it remains to show
  that $\lambda_h^{p,\gamma } $ is a simple eingenvalue. We proceed
  similarly to \cite{Szn16}: If $f\in L^2(\nu )$ is an eigenfunction of
  $L_h^{p,\gamma }$ attached to $\lambda_h^{p,\gamma } $, then we can
  assume that it is non-negative, as explained in the last paragraph.
  Thus $\langle f-\alpha \chi,\bbone \rangle_{L^2(\nu)}=0$ for some
  $\alpha \ge 0$. The function $f-\alpha\chi$ is also an eigenfunction
  attached to $\lambda_h^{p,\gamma } $, so it is a multiple of a
  non-negative function. It follows that $f-\alpha \chi =0$ in $L^2(\nu )$.
  That is $\lambda_h^{p,\gamma }$ is a simple eigenvalue corresponding to
  $\chi = \chi_{h}^{p,\gamma }$, completing the proof of
  \eqref{eqn:eigenexistence}.

  Using \eqref{eqn:kernels}, one easily shows that  $\chi_h^{p,\gamma }(a)>0$ for
  $a\in [h,\infty)$. From \eqref{eqn:defLhpg}, \eqref{eqn:eigenexistence} it follows that
  $\lambda_h^{p,\gamma }$ is decreasing in $h$ and $\gamma $, and
  stricly increasing in $p$. Strict monotonicity in $h$ can be proved as
  in \cite[(3.23)]{Szn16}.
  Since $\chi_{h}^{p,\gamma }>0$ on $[h,\infty)$, for
  $\gamma > \gamma '$ we have
  \begin{equation}
    \begin{split}
      \lambda_h^{p,\gamma }
      &= \langle \chi_{h}^{p,\gamma },
      L_h^{p,\gamma } \chi_h^{p,\gamma }\rangle_{L^2(\nu )}
      \overset{\eqref{eqn:defLhpg}} <
      \langle \chi_{h}^{p,\gamma },
      L_h^{p,\gamma' } \chi_h^{p,\gamma }\rangle_{L^2(\nu )}
      \\&
      \refoversetleq{eqn:eigenexistence}
      \langle \chi_{h}^{p,\gamma' },
      L_h^{p,\gamma' } \chi_h^{p,\gamma' }\rangle_{L^2(\nu )} =
      \lambda_h^{p,\gamma '},
    \end{split}
  \end{equation}
  yielding the strict monotonicity in $\gamma $.
  The continuity of $\lambda_h^{p,h}$ can be shown using the same
  arguments as in the proof of \cite[(3.20)]{Szn16}. In particular, the
  continuity at ${\gamma =-\infty}$ follows from the lower-semicontinuity
  of $\lambda_{h}^{p,\gamma }$ (cf.~\eqref{eqn:eigenexistence}) and its
  monotonicity.

  The rest of the proof of Proposition~\ref{pro:robust} follows the lines
  of \cite{Szn16,AC20a} with mostly obvious modifications, frequently
  relying on the fact that $L_{h}^{p,\gamma }$ is ``smaller'' than $L_h$
  (in the sense explained under \eqref{eqn:kernels}): First, as in
  \cite[Proposition~3.3]{Szn16}, it can be shown that the value of
  $\lambda_h^{p,\gamma }$ dictates whether the
  process is sub- or supercritical,
  \begin{equation}
    \label{eqn:Sinc}
    \{(h,p,\gamma ): {\lambda_h^{p,\gamma }\ge 1}\}\supset
    \mathcal S \supset \mathcal S^0
    = \{(h,p,\gamma ): {\lambda_h^{p,\gamma }>1}\},
  \end{equation}
  where $\mathcal S$ is as in \eqref{eqn:calS}, and the equality
  in \eqref{eqn:Sinc} follows
  from the strict monotonicities of $\lambda_h^{p,\gamma }$ discussed in the
  last paragraph. Second, the same argument as in the proof of
  \cite[Proposition~3.1(i)]{AC20a} provides a control on the growth of
  $\chi_h^{p,\gamma }$, which is necessary for the further steps. Third,
  Section~4 of~\cite{AC20a} (studying an functional equation for the
    non-percolation probability) needs to be adapted: besides changing
  the defininition of the non-linear operator $R_h$ from
  \cite[(4.3),(4.4)]{AC20a}
  accordingly, only relatively straightforward changes are required there.

  After these preparatory steps, the continuity of $\eta $ in claim
  (c) of the proposition can be proved in the same way as Theorem~5.1,
  and  claim (b) in the same way as Theorem~5.3 in \cite{AC20a}. The
  first part of claim (a) follows by monotonicity. Finally,
  using the continuity of
  $\lambda_h^{p,\gamma }$ from (c)
  \begin{equation}
    \label{eqn:lambdaconv}
    \lim_{\gamma \to -\infty }\lambda_{h}^{p,\gamma }
    \overset{\eqref{eqn:defLhpg}}=
    p  \lim_{\gamma \to -\infty }\lambda_{h}^{1,\gamma } = p \lambda_h.
  \end{equation}
  Hence if $h<h_\star$ and thus $\lambda_h >1$, then there exist $p\in (0,1)$
  and $\gamma \in \mathbb R$ with $\lambda_h^{p,\gamma }>1$, proving the second
  part of (a).
\end{proof}

\section{Percolation for the pruned field on the tree}
\label{sec:pruned}

We now consider the \emph{pruned} field
$\varphi_\treegraph^1$ defined in \eqref{eqn:phionetwo} (recall that
  $\varphi_\treegraph^1$ implicitly depends on the sprinkling
  strength $t$) and show that for $h<h_\star$ and $t$ small enough its
level set $E^{\ge h} (\varphi_\treegraph^1)$ percolates.

Let $\mathcal C_\rot^{h,p}(t)$ be the connected component of
$\{x\in V( \treegraph) : \varphi_\treegraph(x) \ge h$,
  ${\varphi^1_\treegraph(x) \ge h}$, ${\iota (x) =1}\}$ containing $\rot$,
and abbreviate $\eta (h,p) \coloneqq \eta (h,p,-\infty)$.

\begin{proposition}
  \label{pro:pruned}
  For every  $\delta \in (0,1)$, $h<h_\star$, and $p \in [0,1]$ such that
  $(h,p,-\infty)\in \mathcal S^0$,
  \begin{equation}
    \label{eqn:prunedb}
    \lim_{\substack{k\to \infty\\t\to 0}}
    P \Big(\abs{\mathcal C_\rot^{h,p}(t)\cap
        S_\treegraph(\rot,k)}
      \ge \big(p(1-\delta )\lambda_h\big)^{k}\Big)  = \eta (h,p).
  \end{equation}
  (In the limit we allow $k\to\infty$ and then $t\to 0$, or
  $t\to 0, k\to\infty$ together.)
\end{proposition}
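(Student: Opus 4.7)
The upper bound in \eqref{eqn:prunedb} is straightforward: since $\mathcal{C}_\rot^{h,p}(t)\subseteq \mathcal{C}_\rot^{h,p}$ and the event in \eqref{eqn:prunedb} forces $\mathcal{C}_\rot^{h,p}$ to reach $S_\treegraph(\rot,k)$, its probability is at most $P(\mathcal{C}_\rot^{h,p}\text{ reaches }S_\treegraph(\rot,k))$, which tends to $\eta(h,p)$ as $k\to\infty$.

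For the lower bound, the plan is to further restrict $\mathcal{C}_\rot^{h,p}(t)$ by a robust-type condition so as to apply the multi-type branching process analysis from the proof of Proposition~\ref{pro:robust}. Specifically, I consider the root component $\mathcal{C}'_\rot\subseteq\mathcal{C}_\rot^{h,p}(t)$ of the set
\begin{equation*}
\mathcal{K}'(h,p,\gamma,t)\coloneqq\bigl\{x: \varphi_\treegraph(x)\geq h,\ \varphi^1_\treegraph(x)\geq h,\ \iota(x)=1,\ \textstyle\sum_{y\in\children(x)}\varphi_\treegraph(y)\geq\gamma\bigr\}.
\end{equation*}
Using the recursion \eqref{eqn:recconst} together with the facts that $\mathtt Z_0^2(y)$ is independent of $\varphi_\treegraph(\anc(y))$ and jointly Gaussian with $Y_y$ with covariance $\mathrm{Cov}(Y_y,\mathtt Z_0^2(y))=t/2$ (and uncorrelated with $Y_{y'}$ for siblings $y'\neq y$), the enlargement $\bar{\mathcal C}'_\rot$ (defined analogously to the one in the proof of Proposition~\ref{pro:robust}) can be interpreted as a multi-type branching process whose $L^2(\nu)$ mean operator reads
\begin{align*}
(L_h^{p,\gamma,t}f)(a) = p(d-1)\bbone_{[h,\infty)}(a)\,E\Bigl[&\bbone_{[\gamma,\infty)}\bigl(\textstyle\sum_{i=1}^{d-1}(\tfrac{a}{d-1}+Y_i)\bigr)\,\bbone_{[h,\infty)}(\tfrac{a}{d-1}+Y_1)\\
&\times\bbone_{\tfrac{a}{d-1}+Y_1-tW\geq h}\,f(\tfrac{a}{d-1}+Y_1)\Bigr],
\end{align*}
with $(Y_i)_{i=1}^{d-1}$ i.i.d.\ $\mathcal N(0,\tfrac{d}{d-1})$ and $W\sim \mathcal N(0,1/2)$ jointly Gaussian with $Y_1$ (covariance $t/2$), independent of $Y_2,\ldots,Y_{d-1}$.

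The crucial observation is that $L_h^{p,\gamma,0}=L_h^{p,\gamma}$ from~\eqref{eqn:defLhpg}, and $\|L_h^{p,\gamma,t}-L_h^{p,\gamma}\|_{L^2(\nu)}\to 0$ as $t\to 0$; the difference is controlled by the indicator $\bbone_{h\leq\tfrac{a}{d-1}+Y_1<h+tW}\bbone_{W>0}$, whose $L^1$-mass is $O(t)$ uniformly in $a$ by boundedness of the conditional Gaussian density of $Y_1$ given $W$. The Riesz-Schauder argument from \eqref{eqn:eigenexistence} then yields $\lambda_h^{p,\gamma,t}\to\lambda_h^{p,\gamma}$, and the analogous continuity $\eta_h^{p,\gamma,t}\to\eta(h,p,\gamma)$ of the survival probability follows by repeating the fixed-point analysis of \cite[Sections~4--5]{AC20a} as used in the proof of Proposition~\ref{pro:robust}(c). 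Given $\delta\in(0,1)$, I now invoke \eqref{eqn:lambdaconv} together with Proposition~\ref{pro:robust}(c) to choose $\gamma_0$ sufficiently negative so that $\lambda_h^{p,\gamma_0}>p(1-\delta/2)\lambda_h$ and $\eta(h,p,\gamma_0)$ is within an arbitrary $\varepsilon>0$ of $\eta(h,p)$; then for $t$ small one has $\lambda_h^{p,\gamma_0,t}>p(1-\delta)\lambda_h$ and $\eta_h^{p,\gamma_0,t}$ within $2\varepsilon$ of $\eta(h,p)$. The analogue of Proposition~\ref{pro:robust}(b) for the modified branching process then gives $\lim_{k\to\infty}P(|\mathcal C'_\rot\cap S_\treegraph(\rot,k)|\geq(\lambda_h^{p,\gamma_0,t})^k/k^2)=\eta_h^{p,\gamma_0,t}$, and combined with $\mathcal C'_\rot\subseteq \mathcal C_\rot^{h,p}(t)$ and $(\lambda_h^{p,\gamma_0,t})^k/k^2\geq (p(1-\delta)\lambda_h)^k$ for $k$ large, the lower bound of \eqref{eqn:prunedb} follows by letting $\varepsilon\to 0$. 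I expect the main technical obstacle to be the operator-norm continuity of $L_h^{p,\gamma,t}$ at $t=0$ together with the transfer to continuity of $\eta_h^{p,\gamma,t}$; both should however be routine adaptations of the machinery already developed in Section~\ref{sec:robust}.
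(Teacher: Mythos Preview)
Your upper bound is fine and matches the paper's. The lower bound, however, has a genuine gap in the branching-process interpretation.

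You claim that $\bar{\mathcal C}'_\rot$ is a multi-type branching process with type $\varphi_\treegraph(x)$ at $x$, governed by your operator $L_h^{p,\gamma,t}$. But survival of $x$ requires $\varphi^1_\treegraph(x)\ge h$, i.e.\ $\mathtt Z_0^2(x)\le(\varphi_\treegraph(x)-h)/t$, and this information \emph{does} affect the offspring. Indeed, for a child $y$ of $x$,
\[
\Cov\bigl(Y_y,\mathtt Z_0^2(x)\bigr)
=\Cov\bigl(\varphi_\treegraph(y)-\tfrac{1}{d-1}\varphi_\treegraph(x),\,\mathtt Z_0^2(x)\bigr)
=0-\tfrac{1}{d-1}\cdot\tfrac{t}{2}
=-\tfrac{t}{2(d-1)}\neq 0,
\]
and since $\Cov(Y_y,\varphi_\treegraph(x))=0$ this conditional covariance given $\varphi_\treegraph(x)$ is still $-t/(2(d-1))$. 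Hence conditioning on $\varphi^1_\treegraph(x)\ge h$ shifts the law of the children's increments $Y_y$: the offspring kernel depends on the parent's $\mathtt Z_0^2$-value, not only on $\varphi_\treegraph(x)$. The process is therefore not Markov in the type $\varphi_\treegraph(x)$ alone, and $L_h^{p,\gamma,t}$ is not the mean operator of any branching process describing $\bar{\mathcal C}'_\rot$. You could try to repair this by enlarging the type to $(\varphi_\treegraph(x),\mathtt Z_0^2(x))\in\mathbb R^2$, but then the whole $L^2(\nu)$ spectral machinery of Section~\ref{sec:robust} would need to be redone for a two-dimensional type space.

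The paper avoids this by a different and cleaner route: it conditions on the \emph{full field} $\varphi_\treegraph$ and computes the conditional law of $\varphi^2_\treegraph$ given $\varphi_\treegraph$ (Lemma~\ref{lem:condZ}), obtaining $E(\varphi^2_\treegraph(x)\mid\varphi_\treegraph)=\tfrac{t^2}{2}\bigl(\varphi_\treegraph(x)-\tfrac1d\sum_{z\sim x}\varphi_\treegraph(z)\bigr)$ and conditional covariance vanishing for $d_\treegraph(x,y)\ge 2$. The robustness constraint $\sum_{y\in\children(x)}\varphi_\treegraph(y)\ge\gamma$, together with $\varphi_\treegraph(\anc(x))\ge h'$ on the robust cluster at level $h'>h$, is exactly what is needed to lower-bound this conditional mean, yielding $E(\varphi^1_\treegraph(x)\mid\varphi_\treegraph)\ge h'+O(t^2)$ uniformly on $\mathcal C_\rot^{h',p,\gamma}$. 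Since the conditional variance is $O(t^2)$ and the conditional dependence has range $1$, Liggett--Schonmann--Stacey gives that the indicators $\bbone\{\varphi^1_\treegraph(x)\ge h\}$ dominate an i.i.d.\ Bernoulli field with parameter $g(t)\uparrow 1$ as $t\downarrow 0$. This produces a clean stochastic domination $\mathcal C_\rot^{h,p}(t)\supset \mathcal C_\rot^{h',p(1-\varepsilon),\gamma}$ for small $t$, and Proposition~\ref{pro:robust}(b,c) finishes. Note that this is precisely why the robustness parameter $\gamma$ was introduced in Section~\ref{sec:robust}: it is the device that controls the conditional mean of the perturbation, not merely an artifact to obtain a branching structure.
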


\begin{proof}
  Since $\mathcal C_\rot^{h,p}(t)\subset \mathcal C_\rot^{h,p, -\infty}$,
  the left-hand side of \eqref{eqn:prunedb} is bounded from above by
  $\lim_{k\to\infty} P(\abs{\mathcal C_\rot^{h,p,-\infty}
      \cap S_\treegraph(\rot,k)} \ge 1)
  = P(\abs{\mathcal C_\rot^{h,p,-\infty}}=\infty) = \eta (h,p)$,
  by \eqref{eqn:etahpg}, yielding the upper bound in \eqref{eqn:prunedb}.

  To prove the lower bound, we will use  Proposition~\ref{pro:robust}(b) with
  $h'\in (h,h_\star)$ and ${\gamma >-\infty}$, and show that when $t>0$ is
  small enough, then subtracting $\varphi_\treegraph^2$ ``does not destroy
  $\mathcal C_\rot^{h',p,\gamma }$ too much''. To this end, recall
  from~\eqref{eqn:phionetwo} that $\varphi^2_\treegraph$ is an
  i.i.d.~field. However, it is not independent of $\varphi_\treegraph$,
  so we need to compute its conditional distribution given
  $\varphi_\treegraph$.

  \begin{lemma}
    \label{lem:condZ}
    Conditionally on $\varphi_\treegraph$, $\varphi_\treegraph^2$ is a Gaussian
    field determined by
    \begin{align}
      \label{eqn:condexpZ}
      E\big(\varphi^2_\treegraph(x) \mid \sigma (\varphi_\treegraph)\big) &= \tfrac {t^2} {2}
      \Big(\varphi_\treegraph(x) - \tfrac 1d \sum_{z\sim x}
        \varphi_\treegraph (z)\Big),
      \\
      \label{eqn:condcovZ}
      E\big( \varphi^2_\treegraph(x) \varphi^2_\treegraph(y) \mid
        \sigma(\varphi_\treegraph)\big)
      &= \tfrac {t^2}2 \delta_{x,y}- \tfrac {t^4}{4}\big(
      \delta_{x,y} - \tfrac 1 {d} \bbone_{x\sim y}\big).
    \end{align}
  \end{lemma}
  \begin{proof}
    By \eqref{eqn:defgfftd}, \eqref{eqn:ttZvariances},
    and \eqref{eqn:phionetwo},
    the fields $\varphi^2_\treegraph $ and
    $\varphi_\treegraph$ are centred jointly Gaussian fields satisfying
    \begin{equation}
      \label{eqn:phicov}
      \begin{split}
        E (\varphi^2_\treegraph(x) \varphi^2_\treegraph(y))
        &=E (\varphi^2_\treegraph(x) \varphi_\treegraph(y))=
        t^2\delta_{x,y}/2,
        \\
        E(\varphi_\treegraph(x) \varphi_\treegraph(y)) &= g_\treegraph(x,y)
      \end{split}
    \end{equation}
    for every $x,y\in V(\treegraph)$. Denoting $Q$ the transition matrix
    of the usual random walk on $\treegraph$, we observe that
     that for every $x,y\in V(\treegraph)$
    \begin{equation}
      \begin{split}
        \Cov\big(&\varphi_\treegraph^2(x)
          - \tfrac {t^2}{2}((\Id - Q)\varphi_\treegraph)(x),
          \varphi_\treegraph(y)\big)
        \\&  = \tfrac {t^2}2 \Big(\delta_{x,y}
        -\sum_{z\in V(\treegraph)}
        (\Id - Q)(x,z) \Cov\big(\varphi_\treegraph(z),
          \varphi_\treegraph(y)\big)\Big)
        \\&  \refoverseteq{eqn:defgfftd} \tfrac {t^2}2 \Big(\delta_{x,y}
        -\sum_{z\in V(\treegraph)}
        (\Id - Q)(x,z) g_\treegraph(z,y)\Big) = 0,
      \end{split}
    \end{equation}
    where in the last equality we used the well-known identity
    $(\Id - Q)g_{\treegraph} = \Id$ for the Green function.   It follows that the field
    $\psi:=\varphi_\treegraph^2 - \frac{t^2}{2}(\Id - Q)\varphi_\treegraph$ is
    independent of $\sigma (\varphi_\treegraph)$. Hence,
    \begin{equation}
      E\big(\varphi^2_\treegraph \mid \sigma (\varphi_\treegraph)\big)
      = E\big (\psi + \tfrac{t^2}{2}(\Id - Q)\varphi_\treegraph
        \mid \sigma (\varphi_\treegraph)\big)
      =\tfrac{t^2}{2}(\Id - Q)\varphi_\treegraph,
    \end{equation}
    from which \eqref{eqn:condexpZ} follows.

    The conditional covariance of
    $\varphi_\treegraph^2$  agrees with the covariance of $\psi$ (see
      e.g.~\cite[Corollary~1.10]{Le-Gal16}), which is
    \begin{equation}
    \begin{split}
      E (\psi(x)\psi(y)) &=
        E(\varphi_\treegraph^2(x)\varphi_\treegraph^2(y))
        \\&-
        \tfrac{t^2}2 \sum_{z\in V(\treegraph)} (\Id-Q)(y,z)
        E(\varphi_\treegraph^2(x)\varphi_\treegraph(z))
        \\&-
        \tfrac{t^2}2 \sum_{z\in V(\treegraph)} (\Id-Q)(x,z)
        E(\varphi_\treegraph^2(y)\varphi_\treegraph(z))
        \\&+ \tfrac {t^4}4 \sum_{z,z'\in
          V(\treegraph)}(\Id-Q)(x,z)(\Id-Q)(y,z')
        E(\varphi_\treegraph(z) \varphi_\treegraph(z')).
        \end{split}
    \end{equation}
    Statement \eqref{eqn:condcovZ} then follows by inserting the values
    of the expectations from~\eqref{eqn:phicov}, and by applying once more the
    above identity for the Green function.
  \end{proof}

  We continue with the proof of Proposition~\ref{pro:pruned}.
  Consider an arbitrary $h'>h$.
  If $x\in \mathcal C_\rot^{h',p,\gamma }\setminus \{\rot\}$, then
  ${\varphi_\treegraph(x) \ge h'}$ and $\varphi_{\treegraph}(\anc(x))\ge h'$.
  Therefore, by the robustness condition \eqref{eqn:robust}, for
  $x\in \mathcal C_\rot^{h',p,\gamma }\setminus \{\rot\}$,
  \begin{equation}
    \begin{split}
      E\big(\varphi_\treegraph^1(x) \mid \sigma (\varphi_\treegraph)\big)
      &= E\big(\varphi_\treegraph(x) - \varphi_\treegraph^2(x) \mid
        \sigma(\varphi_\treegraph)\big)
      \\&\refoverseteq{eqn:condexpZ}
      \varphi_\treegraph (x) - \tfrac {t^2}2 \Big(\varphi_\treegraph (x)-\tfrac 1d
        \sum_{z\sim x} \varphi_\treegraph(z)\Big)
      \\&= (1-\tfrac {t^2}2)\varphi_\treegraph(x) + \tfrac {t^2}{2d}
      \varphi_\treegraph(\anc(x)) + \tfrac {t^2}{2d} \sum_{z\in \children(x)}
      \varphi_\treegraph(z)
      \\[-1mm]&\refoversetgeq{eqn:robust} h' +t^2(\tfrac \gamma{2d}-\tfrac 12  +\tfrac {h'}{2d}).
    \end{split}
  \end{equation}
  By \eqref{eqn:condcovZ},
  $\Var \big( \varphi_\treegraph^1(x) \mid \sigma(\varphi_\treegraph)\big) \le c t^2$.
  For $t$ small, $h'+t^2(\tfrac \gamma{2d}-\tfrac 12  -\tfrac {h'}{2d})> h$ and thus
  \begin{equation}
    \label{eqn:phioneprob}
    \lim_{t\downarrow 0}
    P\big(\varphi^1_\treegraph(x) \ge h \mid \sigma(\varphi_\treegraph)\big) =1,
    \qquad \text{uniformly for }x\in
    \mathcal C_\rot^{h',p,\gamma }\setminus \{\rot\}.
  \end{equation}
  A similar computation implies that \eqref{eqn:phioneprob} holds for
  $x=\rot$ as well. In addition, by \eqref{eqn:condcovZ}, conditionally
  on $\varphi_\treegraph$, the random variables $\varphi^1_\treegraph(x)$,
  $\varphi^1_\treegraph(y)$ are independent whenever ${d_\treegraph(x,y)\ge 2}$.
  By the domination argument of \cite{LSS97}, the family
  $(\bbone_{[h,\infty)}(\varphi^1_\treegraph(x)): {x\in \mathcal C_\rot^{h',p,\gamma }})$
  dominates (conditionally on $\varphi_\treegraph$) an independent
  Bernoulli percolation on $\mathcal C_\rot^{h',p,\gamma }$ with
  parameter $g(t)$ and $g(t ) \uparrow 1$ as $t\downarrow 0$. As
  consequence:
  \begin{equation}
    \label{eqn:domination}
    \parbox{10cm}{For every $\varepsilon >0$, $h'>h$ and $\gamma\in
      \mathbb R $ there is $t_0=t_0(h,h',\gamma ,\varepsilon )$ such that
       $\mathcal C_0^{h,p}(t)$ dominates
       $\mathcal C_0^{h',p(1-\varepsilon ),\gamma }$ for all $t<t_0$.}
  \end{equation}

  We now fix $\delta >0$ and $\varepsilon < \delta /4$. By the continuity of
  $\lambda_{h}^{p,\gamma }$ proved in Proposition~\ref{pro:robust}(c),
  there is a neighbourhood $\mathcal U_{\delta  }\subset \mathcal S^0$ of $(h,p,-\infty)$ such that ,
  \begin{equation}
    \lambda_{h'}^{p(1-\varepsilon ),\gamma }
    = p(1-\varepsilon ) \lambda_{h'}^{1,\gamma }
    \ge p \big(1-\tfrac \delta 2\big) \lambda_h
    \qquad
    \text{for every ${(h',p,\gamma )\in \mathcal U_{\delta }}$.}
  \end{equation}
  In particular,
  $(p(1-\delta ) \lambda_h)^k \le (\lambda_{h'}^{p(1-\varepsilon ),\gamma })^k / k^2$
  for $k\ge k_0(\delta )$. Hence, by \eqref{eqn:domination}, for such $k$,
  for every $h'>h$ and $\gamma $ such that
  $(h',p,\gamma )\in \mathcal U_\delta $,
  and for every $t<t_0(h,h',\gamma ,\varepsilon )$, the probability in
  \eqref{eqn:prunedb} satisfies
  \begin{equation}
    \begin{split}
      P \big(&\abs{\mathcal C_\rot^{h,p}(t)\cap
          S_\treegraph(\rot,k)}
        \ge \big(p(1-\delta )\lambda_h\big)^{k}\big)
      \\&\ge P \big(\abs{\mathcal C_\rot^{h',p(1-\varepsilon ),\gamma  }\cap
          S_\treegraph(\rot,k)}
        \ge (\lambda_{h'}^{p(1-\varepsilon ),\gamma })^k/k^2\big).
    \end{split}
  \end{equation}
  Observe that the probability on the right-hand side is independent of $t$.
  Therefore, by Proposition~\ref{pro:robust}(b),
  for every $\varepsilon < \delta /4$, $h'\in (h,h_\star)$ and
  $\gamma $ such that
  $(h',p,\gamma )\in \mathcal U_\delta $,
  \begin{equation}
    \liminf_{\substack{k\to \infty\\t\to 0}}
    P \big(\abs{\mathcal C_\rot^{h,p }(t)\cap
        S_\treegraph(\rot,k)}
      \ge (p(1-\delta )\lambda_h)^k \big) \ge \eta(h',p(1-\varepsilon ),\gamma ),
  \end{equation}
  where we can take $k\to\infty$ and then $t\to 0$, or $k\to\infty$,
  $t\to 0$ simultaneously.
  Since $\varepsilon >0$ can be taken arbitrarily close to $0$ and
  $(h',\gamma )$ close to $(h,-\infty)$, the lower bound for \eqref{eqn:prunedb}
  follows using the continuity of $\eta $ from Proposition~\ref{pro:robust}(c).
\end{proof}

\section{Many mesoscopic components for the pruned field on finite graphs} 
\label{sec:mesoscopic}

As a corollary of Proposition~\ref{pro:pruned} and the coupling stated in
Proposition~\ref{pro:coupling}, we now prove
the existence of many mesoscopic components for the level set of the field
$\Psi_\finitegraph^1$.

To state this result precisely, we need to introduce an additional notation. Let
$\bar Z^2_0 = (\bar Z^2_0(x), {x\in \vfinitegraph})$ be a copy of $Z^2_0$
which is independent of $Z$, $Z_0^1$ and $Z_0^2$,
and set (cf.~\eqref{eqn:Psionetwo})
\begin{equation}
  \label{eqn:barPsi}
  \bar \Psi^2_\finitegraph(x) \coloneqq t
  \Big(\bar Z^2_0(x) - \frac 1{N_n} \sum_{y\in \finitegraph} \bar Z^2_0(y)\Big).
\end{equation}
The field $\bar \Psi^2_\finitegraph$ has the same law as
$\Psi^2_\finitegraph$ and thus
$\bar \Psi_\finitegraph \coloneqq \Psi^1_\finitegraph + \bar \Psi^2_\finitegraph$
has the same law as $\Psi_\finitegraph$, that is it is a zero-average
Gaussian free field on $\finitegraph$.
For $p>1/2$ we define $L = L(p)<0$ by
\begin{equation}
  \label{eqn:L}
  P(\bar Z_0^2(x) \ge L) = p.
\end{equation}
We set
\begin{equation}
  \label{eqn:barvn}
  \bvfinitegraph \coloneqq \{x\in \vfinitegraph: {\bar Z^2_0(x) \ge L} \},
\end{equation}
and use $\bfinitegraph$ to denote the subgraph of $\finitegraph$ induced
by $\bvfinitegraph$.
Finally, for $x\in \bvfinitegraph$, let $\mathcal C_x^{h}(t)$
be the connected component component of the set
$E^{\ge h}(\Psi^1_\finitegraph) \cap E^{\ge h}(\Psi_\finitegraph)$ in~
$\bfinitegraph$.

To understand the reason for this notation, note that eventually,
in Section~\ref{sec:sprinkling}, we will use $\bar \Psi_\finitegraph$, and
not $\Psi_\finitegraph$, to show that the supercritical level set has a
giant component. In particular, we will use the field
$\bar \Psi_\finitegraph^2$ for the sprinkling. At the sites where this
field is very small, it can potentially destroy the connected components
of the level set. To avoid this, we will restrict to $\bfinitegraph$ in our
sprinkling construction. It is also useful to compare the definition of
$\mathcal C_x^h(t)$ with the definition of $\mathcal C_\rot^{h,p}(t)$ in
Section~\ref{sec:pruned}, in particular note that the role of the
percolation $\iota $ is taken by the subgraph $\bfinitegraph$.

\begin{proposition}
  \label{pro:mesoscopic}
  Let $h<h_\star$ and let $p$ be such that $(h,p,-\infty)\in \mathcal S^0$.
  Then there exists $c_h \in (0,1)$  such that for any
  $\delta >0$ and any sequence $t_n \downarrow 0 $,
  \begin{equation}
    \label{eqn:mesoscopic}
    \lim_{n\to\infty} P\Big(\sum_{x\in \vfinitegraph}
      \bbone_{\{\abs {\mathcal C_x^{h}(t_n)}\ge N_n^{c_h}\}}
      \ge (1-\delta )\eta (h,p)N_n\Big) = 1.
  \end{equation}
\end{proposition}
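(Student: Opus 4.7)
The plan is a first-and-second moment argument on a \emph{localised} variant of the event in \eqref{eqn:mesoscopic}. Fix $\alpha'\in (0,\alpha/2)$, set $r_n\coloneqq \lfloor \alpha'\log_{d-1}N_n\rfloor$, and let $\tilde{\mathcal C}_x^h(t_n)$ be the connected component of $x$ in the subgraph of $\bfinitegraph$ induced on $B_\finitegraph(x,r_n)$ after further removing vertices where $\Psi_\finitegraph<h$ or $\Psi_\finitegraph^1<h$; set $A_x\coloneqq \{|\tilde{\mathcal C}_x^h(t_n)|\ge N_n^{c_h}\}$. Because $(h,p,-\infty)\in \mathcal S^0$ forces $p\lambda_h>1$ via~\eqref{eqn:lambdaconv}, I fix $\delta_0>0$ with $p(1-\delta_0)\lambda_h>1$ and pick $c_h\in(0,\alpha')$ small enough that $(p(1-\delta_0)\lambda_h)^{r_n}\ge N_n^{c_h}$ for all large $n$. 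Since $\tilde{\mathcal C}_x^h(t_n)\subset \mathcal C_x^h(t_n)$, it suffices to prove $\sum_x\bbone_{A_x}\ge (1-\delta)\eta(h,p)N_n$ with probability tending to $1$.

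For the first moment, for every $r_n$-treelike $x$ I invoke the coupling of Proposition~\ref{pro:coupling} (with an arbitrary auxiliary $x'$ whose $\varphi_\treegraph'$ is discarded) and enrich it via Remark~\ref{rem:Zcoupling}: the further splittings $Z_0=\sqrt{1-t_n^2}Z_0^1+t_n Z_0^2$ and $\mathtt Z_0=\sqrt{1-t_n^2}\mathtt Z_0^1+t_n \mathtt Z_0^2$ are coupled summand-by-summand through $\rho_{x,2r_n}$, and the independent field $\bar Z_0^2$ is coupled to the tree Bernoulli variables via $\iota(\rho_{x,2r_n}(y))\coloneqq \bbone_{\{\bar Z_0^2(y)\ge L\}}$ on the ball (admissible by~\eqref{eqn:L}). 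On the good coupling event of probability $1-o(1)$ (Proposition~\ref{pro:coupling} with $r=r_n$ and a small $\varepsilon_0>0$, together with the fact that the $\Pi_\finitegraph$-correction has an $L^\infty$-bound of order $N_n^{-1/2+o(1)}$), both $\Psi_\finitegraph$ and $\Psi_\finitegraph^1$ differ from $\varphi_\treegraph\circ\rho_{x,2r_n}$ and $\varphi_\treegraph^1\circ\rho_{x,2r_n}$ by at most $\varepsilon_0$ on $B_\finitegraph(x,r_n)$. Combining Proposition~\ref{pro:pruned} (lower bound), the inclusion $\mathcal C_\rot^{h,p}(t_n)\subset \mathcal C_\rot^{h,p,-\infty}$ together with $\lim_{M\to\infty}P(|\mathcal C_\rot^{h,p,-\infty}|\ge M)=\eta(h,p)$ (upper bound), and the continuity of $\eta$ from Proposition~\ref{pro:robust}(c) used to let $\varepsilon_0\to 0$, I obtain $P(A_x)=\eta(h,p)+o(1)$ uniformly in $r_n$-treelike $x$. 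A standard short-cycle count under Assumption~\ref{assumptions}(b) ensures that $o(N_n)$ vertices fail to be $r_n$-treelike, so $E[\sum_x\bbone_{A_x}]=(\eta(h,p)+o(1))N_n$.

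For the second moment I restrict to treelike pairs $x\ne x'$ with $d_\finitegraph(x,x')>2r_n+2$, so that the $2r_n$-balls are disjoint and $A_x,A_{x'}$ depend on disjoint local data. The two-point coupling $\mathbb Q_n^{x,x'}$, enriched as above, then identifies $A_x,A_{x'}$ (up to a coupling error of $o(1)$) with events on the two \emph{independent} tree fields $\varphi_\treegraph,\varphi_\treegraph'$, yielding $P(A_x\cap A_{x'})\le (\eta(h,p)+o(1))^2+o(1)$. The remaining contributions --- diagonal, pairs with $d_\finitegraph(x,x')\le 2r_n+2$ (at most $N_n(d-1)^{2r_n+2}=O(N_n^{1+2\alpha'})=o(N_n^2)$ since $\alpha'<\tfrac 12$), and pairs involving a non-treelike vertex --- are all $o(N_n^2)$. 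Hence $\Var(\sum_x\bbone_{A_x})=o(N_n^2)$, and Chebyshev's inequality delivers~\eqref{eqn:mesoscopic}. The main obstacle is the enrichment of Proposition~\ref{pro:coupling} so that not only $\Psi_\finitegraph,\varphi_\treegraph$ but also the pruned fields $\Psi_\finitegraph^1,\varphi_\treegraph^1$ and the sprinkling subgraphs $\bfinitegraph,\{\iota=1\}$ are jointly controlled; once this bookkeeping is carried out the first/second moment combinatorics are standard.
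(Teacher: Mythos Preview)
Your proposal is correct and follows essentially the same route as the paper: a first/second moment argument on localised events, transferred to the tree via the coupling of Proposition~\ref{pro:coupling} enriched (through Remark~\ref{rem:Zcoupling}) to carry along the splittings $Z_0^i\leftrightarrow\mathtt Z_0^i$ and the identification $\iota\circ\rho_{x,2r_n}=\bbone_{[L,\infty)}(\bar Z_0^2)$, with Proposition~\ref{pro:pruned} supplying the lower bound, the inclusion $\mathcal C_\rot^{h,p}(t)\subset\mathcal C_\rot^{h,p,-\infty}$ the upper bound, and the continuity of $\eta$ from Proposition~\ref{pro:robust}(c) absorbing the $\varepsilon_0$-shift. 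Two minor slips that do not affect the argument: Proposition~\ref{pro:coupling} requires $x$ to be $2r_n$-treelike (not merely $r_n$-treelike) and $B_\finitegraph(x,2r_n)\cap B_\finitegraph(x',2r_n)=\emptyset$ (so $d_\finitegraph(x,x')>4r_n$ rather than $2r_n+2$); your choice $\alpha'<\alpha/2$ already accommodates both, and the count of excluded pairs remains $o(N_n^2)$.
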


\begin{proof}
  The proof follows the steps of Section~5 of \cite{AC20b} and is an
  application of the second moment method. Some simplifications,
  compared to \cite{AC20b}, are due to the fact that our
  Proposition~\ref{pro:coupling} uses two independent copies of
  $\varphi_\treegraph$, so we do not need to use the decoupling inequalities
  for $\varphi_\treegraph$ as in \cite{AC20b}.

  Let $r_n = c_1 \log N_n$ with $c_1>0$, and set
  \begin{equation}
    \begin{split}
      W_n&\coloneqq
      \{x\in \vfinitegraph: x \text{ is $2r_n$-treelike}\},
      \\
      \widetilde W_n &\coloneqq \big\{(x,x')\in W_n\times W_n:
        B_\finitegraph(x,2 r_n) \cap B_\finitegraph(x', 2r_n)  = \emptyset
        \big\}.
    \end{split}
  \end{equation}
  By \cite[(5.6), (5.7)]{AC20b}, it is possible to fix $c_1$ small,
  such that, for some some $c>0$ and for all $n$ large enough,
  \begin{equation}
    \label{eqn:treelikes}
    \abs{W_n }
    \ge N_n (1-N_n^{-c})
    \quad\text{and}\quad
    \abs{\widetilde W_n}
    \ge N_n^2 (1- N_n^{-c}).
  \end{equation}
  We will prove \eqref{eqn:mesoscopic} with
  \begin{equation}
    \label{eqn:deltaprime}
    c_h \coloneqq c_1 \log \big(p \lambda_{h}(1-2\delta ')\big),
  \end{equation}
  where $\delta '>0$ is small enough so that
  $p \lambda_{h}(1-2\delta ')>1$, which is possible since
  $(h,p,-\infty)\in \mathcal S^0$ implies
  $1<\lambda_h^{p,-\infty} = p \lambda_h$.

  Let $\tilde {\mathcal C}_x^h(t_n)\subset \mathcal C_x^h(t_n)$ be the
  connected component of $ \mathcal C_x^h(t_n) \cap B(x,2r_n)$ containing
  $x$, and define events
  \begin{equation}
    \begin{split}
      &A_x^{\finitegraph ,h} \coloneqq\big\{
        \abs{\tilde{\mathcal C}_x^{h}(t_n) \cap S_\finitegraph(x,r_n)}
        \ge N_n^{c_h}\big\}, \qquad \text{for }x\in \vfinitegraph,
      \\&A_\rot^{\treegraph ,h} \coloneqq\big\{
        \abs{\mathcal C_\rot^{h,p}(t_n) \cap S_\treegraph(\rot,r_n)}
        \ge N_n^{c_h}\big\}.
    \end{split}
  \end{equation}
  We now show
  \begin{equation}
    \label{eqn:mesoscopicaa}
    \lim_{n\to\infty} P\Big(\sum_{x\in W_n}
      \bbone_{A_x^{\finitegraph,h}}
      \ge (1-\delta )\eta (h,p)N_n\Big) = 1,
  \end{equation}
  from which \eqref{eqn:mesoscopic} directly follows.

  To show \eqref{eqn:mesoscopicaa}, for every pair
  $x,x'\in \widetilde W_n$, we use the coupling $\mathbb Q_{n}^{x,x'}$
  from Proposition~\ref{pro:coupling} (with $r=r_n$) to couple
  $\Psi_\finitegraph$ with two independent copies of $\varphi_\treegraph$,
  $\varphi_\treegraph'$ of the Gaussian free field on $\treegraph$. By
  Remark~\ref{rem:Zcoupling}, this coupling also couples the underlying
  fields $Z_0$, $\mathtt Z_0$ and $\mathtt Z_0'$ as in
  \eqref{eqn:Z0coupling}. In addition, we write
  $\mathtt Z_0 = \sqrt{1-t_n^2} \mathtt Z_0^1 + t_n \mathtt Z_0^2$ and
  assume that $Z_0^2(y) = \mathtt Z_0^2(\rho_{x,2r_n}(y))$ for every
  $y\in B_{\finitegraph}(x,2r_n)$, where $\mathtt Z_0^1$, $\mathtt Z_0^2$
  are independent copies of $\mathtt Z_0$. We also use analogous
  statements for $\mathtt Z'$, $\mathtt Z_0^{\prime 2}$ in the ball
  $B_{\finitegraph}(x',2r_n)$.  We also couple the site percolation
  $\iota$ (introduced in the paragraph above \eqref{eqn:robust}) and its
  independent copy $\iota'$ with the field $\bar Z_0^2$ so that
  \begin{equation}
    \begin{split}
      \label{eqn:perccoupling}
      &\iota (\rho_{x,2r_n}(y)) = \bbone_{[L,\infty)}(\bar Z^2_0(y)), \qquad y\in
      B_\finitegraph(x,2r_n),
      \\&\iota' (\rho_{x',2r_n}(y)) = \bbone_{[L,\infty)}(\bar Z^2_0(y)), \qquad y\in
      B_\finitegraph(x',2r_n),
    \end{split}
  \end{equation}
  which is always possible due to the choice \eqref{eqn:L} of $L$ and since
  $B_\finitegraph(x,2r_n)$ and $B_\finitegraph(x',2r_n)$ are disjoint.
  Note that \eqref{eqn:perccoupling} implies
  \begin{equation}
    \rho_{x,2r_n}(\bvfinitegraph \cap B(x,2r_n))
    = \{y\in B_\treegraph(\rot,2r_n): \iota(y) =1\}.
  \end{equation}

  We now fix $\varepsilon  >0$ arbitrary but small enough so that
  \begin{equation}
    \label{eqn:epsfix}
    \lambda_{h+\varepsilon }>(1-\delta ')\lambda_h
    \qquad \text{and}\qquad
    (h+\varepsilon ,p,-\infty)\in \mathcal S_0.
  \end{equation}
  where $\delta '$ was fixed in \eqref{eqn:deltaprime}.
  When all coupling equalities from the last paragraph hold and when the coupling
  $\mathbb Q_n^{x,x'}$ succeeds, that is the complement of the event on
  the left-hand side of \eqref{eqn:coupling} occurs, then it follows from
  the definitions of the components $\tilde{\mathcal C}_x^h(t_n)$ and
  $\mathcal C_\rot^{h,p}(t_n)$ that
  $A_\rot^{\treegraph,h-\varepsilon} \supset A_x^{\finitegraph,h}
  \supset A_\rot^{\treegraph, h+ \varepsilon }$,
  and similarly for~$x'$, replacing
  $A_\rot^{\treegraph, h\pm \varepsilon }$ by their independent copies
  defined in terms of the field $\varphi_\treegraph'$. Hence,  for
  $x\in W_n$,
  \begin{equation}
    \label{eqn:alb}
    P(A_x^{\finitegraph,h})\ge P(A_\rot^{\treegraph,h+\varepsilon }) -
    e(n,\varepsilon ),
  \end{equation}
  where $e(n,\varepsilon )$ is the probability that the coupling fails. By
  Proposition~\ref{pro:coupling}, $e(n,\varepsilon )$ is
  bounded by
  right-hand side of \eqref{eqn:coupling} with $r=r_n$, in particular
  $0\le e(n,\varepsilon ) \le c(\varepsilon ) N_n^{-k}$ for any $k\in \mathbb N$.

  By Proposition~\ref{pro:pruned} (applied with $h+\varepsilon$ instead of $h$,
    and $\delta '$ instead of $\delta $), using also~\eqref{eqn:epsfix},
  we obtain that
  \begin{equation}
    \label{eqn:liminf}
    \liminf_{n\to\infty} P(A_\rot^{\treegraph,h+\varepsilon })
    \ge \eta(h+\varepsilon ,p).
  \end{equation}
  Hence, by \eqref{eqn:alb}, also
  $\liminf_{n\to\infty}P(A_x^{\finitegraph,h}) \ge \eta (h+\varepsilon ,p)$
  for every $x\in W_n$. As consequence, since $\varepsilon >0$ is
  arbitrary, using \eqref{eqn:treelikes} and the continuity of $\eta $
  from Proposition~\ref{pro:robust}(c),
  \begin{equation}
    \label{eqn:firstmoment}
    \liminf_{n\to\infty} \frac 1 {N_n}
    E\Big(\sum_{x\in W_n} \bbone_{A_x^{\finitegraph,h}}\Big)
    \ge \eta (h ,p).
  \end{equation}

  We now compute the variance of the sum in the last display.
  Expanding it, and then using the coupling
  $\mathbb Q_{n}^{x,x'}$ again,
  \begin{equation}
    \label{eqn:aaa}
    \begin{split}
      &\Var \Big( \sum_{x\in W_n}
        \bbone_{A_x^{\finitegraph,h}} \Big)
      = \sum_{x,x' \in W_n}
      \Big(P \big(A_x^{\finitegraph,h}\cap A_{x'}^{\finitegraph,h}\big) -
        P (A_x^{\finitegraph,h})
        P (A_{x'}^{\finitegraph,h})\Big)  .
      \\&\le \abs{(W_n \times W_n)\setminus \widetilde W_n }
      + \sum_{(x,x') \in \widetilde W_n}\Big(
        P(A_\rot^{\treegraph,h-\varepsilon })^2-
        P(A_\rot^{\treegraph,h+\varepsilon })^2
        \Big) + e(n,\varepsilon ).
    \end{split}
  \end{equation}
  By definition of $A_\rot^{\treegraph,h}$, using also that
  $\mathcal C_\rot^{h-\varepsilon ,p}(t_n)
  \subset \mathcal C_\rot^{h-\varepsilon ,p,-\infty}$,
  \begin{equation}
    \begin{split}
      \label{eqn:limsup}
      \limsup_{n\to\infty} P(A_\rot^{\treegraph,h-\varepsilon })
      &= \limsup_{n\to\infty}
      P\big(\abs{\mathcal C_\rot^{h-\varepsilon ,p}(t_n) \cap S_\treegraph(\rot,r_n)}
        \ge N_n^{c_h}\big)
      \\&\le \limsup_{n\to\infty}
      P\big(\abs{\mathcal C_\rot^{h-\varepsilon ,p,-\infty} \cap S_\treegraph(\rot,r_n)}
        \ge 1\big)
      \\& =P(\abs{\mathcal C_\rot^{h-\varepsilon ,p,-\infty}} = \infty) =
      \eta (h-\varepsilon ,p).
    \end{split}
  \end{equation}
  Inequalities \eqref{eqn:treelikes}, \eqref{eqn:liminf}, \eqref{eqn:aaa} and \eqref{eqn:limsup}
  together imply  that
  \begin{equation}
    \begin{split}
      \label{eqn:secondmoment}
      \limsup_{n\to\infty}\frac1{N_n^2} \Var &\Big(
        \sum_{x\in W_n}
        \bbone_{A_{x}^{\finitegraph,h}} \Big)
      \leq  \eta(h-\varepsilon,p)^2 -
      \eta(h+\varepsilon,p )^2.
    \end{split}
  \end{equation}
  The right-hand side of this inequality can be made arbitrary small by
  taking $\varepsilon \downarrow 0$, using
  the continuity of $\eta $.
  Statement \eqref{eqn:mesoscopicaa} then follows from
  \eqref{eqn:firstmoment} and \eqref{eqn:secondmoment} by applying
  Chebyshev inequality.
\end{proof}

We finish this section by a simple lemma which gives a lower bound on the
number of vertices that are contained in small components of the
(non-pruned) field $\Psi_\finitegraph$. This lower bound will be used to
show the upper bound on $\abs{\mathcal C_\Max^{\finitegraph,h}}$ in the
proof of Theorem~\ref{thm:main}. In its statement we use
$\mathcal C_x^{\finitegraph,h}$ to denote connected component of
$E^{\ge h}(\Psi_\finitegraph)$ containing~$x\in \vfinitegraph$.

\begin{lemma}
  \label{lem:smallcomponents}
  Let $\mathcal H_n \coloneqq \{x\in \vfinitegraph:
    \mathcal C_x^{\finitegraph,h}\subset B_\finitegraph(x,r_n/2)\}$ with
  $r_n = c_1 \log N_n$ as in the last proof.  Then for
  every $h<h_\star$ and $\delta >0$
  \begin{equation}
    \lim_{n\to\infty} P (\abs{\mathcal H_n} > (1-\eta (h) -\delta ) N_n) = 1.
  \end{equation}
\end{lemma}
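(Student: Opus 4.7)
My plan is to mimic the second-moment argument from the proof of Proposition~\ref{pro:mesoscopic}, with the event $A_x^{\finitegraph,h}$ replaced by $\{x\in\mathcal H_n\}$. A preliminary observation is that $\{x\in\mathcal H_n\}$ depends only on the field values in $B_\finitegraph(x,r_n/2)$: indeed, $\mathcal C_x^{\finitegraph,h}\not\subset B_\finitegraph(x,r_n/2)$ if and only if there is a connected $\ge h$-path of $\Psi_\finitegraph$ from $x$ to $S_\finitegraph(x,r_n/2)$ staying inside the ball. For any $r_n$-treelike $x$ (which includes all $x\in W_n$, since the latter are $2r_n$-treelike), I apply Proposition~\ref{pro:coupling} with $r=r_n/2$ to couple $\Psi_\finitegraph$ with a Gaussian free field $\varphi_\treegraph$ on $\treegraph$; taking $c_1$ sufficiently small in $r_n=c_1\log N_n$, the coupling failure probability $e(n,\varepsilon)$ is $O(N_n^{-k})$ for any $k\in\mathbb N$.

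For the first moment, fix $\varepsilon>0$ with $h\pm\varepsilon<h_\star$. On the good event $\{D(x,r_n/2)\le\varepsilon\}$, the isomorphism $\rho_{x,r_n}$ pushes any $\ge h$-path of $\Psi_\finitegraph$ from $x$ to $S_\finitegraph(x,r_n/2)$ inside the ball to a $\ge h-\varepsilon$-path of $\varphi_\treegraph$ from $\rot$ to $S_\treegraph(\rot,r_n/2)$, and conversely pulls back any $\ge h+\varepsilon$-path on the tree (which on $\treegraph$ is necessarily the geodesic, hence contained in $B_\treegraph(\rot,r_n/2)$) to a $\ge h$-path on $\finitegraph$. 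This yields the sandwich
\begin{equation*}
P\bigl(\mathcal C_\rot^{h+\varepsilon}\cap S_\treegraph(\rot,r_n/2)\ne\emptyset\bigr)-e(n,\varepsilon)
\le P(x\notin\mathcal H_n)
\le P\bigl(\mathcal C_\rot^{h-\varepsilon}\cap S_\treegraph(\rot,r_n/2)\ne\emptyset\bigr)+e(n,\varepsilon).
\end{equation*}
As $r_n\to\infty$, $P(\mathcal C_\rot^{h'}\cap S_\treegraph(\rot,r_n/2)\ne\emptyset)\downarrow\eta(h')$. Letting $n\to\infty$ and then $\varepsilon\downarrow 0$, and invoking the continuity of $\eta$ at $h<h_\star$ from Proposition~\ref{pro:robust}(c) (since $(h',1,-\infty)\in\mathcal S^0$ for $h'<h_\star$), I conclude $P(x\in\mathcal H_n)\to 1-\eta(h)$ uniformly in $r_n$-treelike $x$. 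Together with \eqref{eqn:treelikes} this gives $\liminf_{n\to\infty}N_n^{-1}E[\abs{\mathcal H_n}]\ge 1-\eta(h)$.

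For the second moment, the two-point coupling $\mathbb Q_n^{x,x'}$ of Proposition~\ref{pro:coupling} furnishes, for $(x,x')\in\widetilde W_n$, two \emph{independent} copies $\varphi_\treegraph,\varphi'_\treegraph$ of the GFF on $\treegraph$. Applying the sandwich to both coordinates and following the computation leading to \eqref{eqn:secondmoment}, I obtain
\begin{equation*}
\limsup_{n\to\infty}\frac{\Var\abs{\mathcal H_n}}{N_n^2}\le(1-\eta(h+\varepsilon))^2-(1-\eta(h-\varepsilon))^2,
\end{equation*}
which vanishes as $\varepsilon\downarrow 0$ by the continuity of $\eta$. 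Chebyshev's inequality then completes the proof. The only genuinely new ingredient compared to Proposition~\ref{pro:mesoscopic} is the deterministic path-transfer establishing the sandwich, which is actually easier here because the event $\{x\in\mathcal H_n\}$ is monotone decreasing in the field and detectable within a ball of radius $r_n/2$; the main obstacle, the control of $P(x\notin\mathcal H_n)$ by a tree event in both directions, is resolved precisely by the fact that on a tree the component of $\rot$ in $E^{\ge h'}(\varphi_\treegraph)$ restricted to a ball equals the unrestricted component intersected with the ball.
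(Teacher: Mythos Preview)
Your proposal is correct and follows essentially the same route as the paper's proof: define the local events $A_x^{\finitegraph,h}=\{x\in\mathcal H_n\}$ and the tree analogues $A_\rot^{\treegraph,h}=\{\mathcal C_\rot^h\subset B_\treegraph(\rot,r_n/2)\}$, use the coupling of Proposition~\ref{pro:coupling} to obtain the sandwich $A_\rot^{\treegraph,h-\varepsilon}\subset A_x^{\finitegraph,h}\subset A_\rot^{\treegraph,h+\varepsilon}$ on the good event, and then run the first/second moment computation exactly as in Proposition~\ref{pro:mesoscopic}. One small inaccuracy: your ``preliminary observation'' that $x\notin\mathcal H_n$ is equivalent to the existence of a $\ge h$-path from $x$ to $S_\finitegraph(x,r_n/2)$ is off by one (the component could reach the sphere and still be contained in the closed ball); the correct statement is that the restricted component reaches $S_\finitegraph(x,\lfloor r_n/2\rfloor+1)$, so you should apply the coupling with a radius strictly larger than $r_n/2$ (e.g.\ $r=r_n$ as in the paper), but this does not affect the argument.
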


\begin{proof}
  The proof is very similar to the previous one. Due to
  \eqref{eqn:treelikes} it is sufficient to prove the claim for
  $\abs{\mathcal H_n \cap W_n}$ instead of $\abs{\mathcal H_n}$.  For
  $x\in W_n$ define the events
  $A_x^{\finitegraph,h} \coloneqq \{\mathcal C_x^{\finitegraph,h} \subset B_\finitegraph(x,r_n/2)\}$,
  $A_\rot^{\treegraph,h} \coloneqq \{\mathcal C_\rot^{h} \subset B_\treegraph(\rot,r_n/2)\}$.
  Using Proposition~\ref{pro:coupling}, we can couple those events so
  that
  $A_\rot^{\treegraph,h-\varepsilon }\subset A_x^{\finitegraph,h}\subset A_\rot^{\treegraph,h+\varepsilon }$.
  By \eqref{eqn:defeta},
  $\lim_{n\to\infty} P(A_x^{\treegraph,h}) = 1-\eta(h)$. Using the same
  first and second moment method arguments as in the previous proof, the
  lemma easily follows.
\end{proof}

\section{Expansion properties of reduced graphs}
\label{sec:expansion}

Before going to the final sprinkling step, we need a little lemma that
show that particular subgraphs of $\finitegraph$ still have good expansion
properties. To this end recall from \eqref{eqn:barvn} the definition of
the subgraph $\bfinitegraph$. For  $K\in \mathbb R$, let
\begin{equation}
  \hvfinitegraph =  \{x\in \vfinitegraph:
    \bar Z^2_0(x)\ge L, \Psi^1_\finitegraph(x) \ge K\}\subset
  \bvfinitegraph,
\end{equation}
and let $\hfinitegraph$ be the subgraph of $\finitegraph$ induced by
$\hvfinitegraph$. The additional condition $\Psi^1_\finitegraph(x) \ge K$ will later
ensure that, in the sprinkling step, the sites in $\hvfinitegraph$ have a reasonable chance to be
in the level set $E^{\ge h} (\bar \Psi_\finitegraph)$ of the field
$\bar \Psi_\finitegraph$ (defined under \eqref{eqn:barPsi}).
We will always assume that $K\le h$, so that the mesoscopic
connected components $\mathcal C_x^{h}(t_n)$ (in $\bfinitegraph$ as considered in
Proposition~\ref{pro:mesoscopic}) are also connected components in
$\hfinitegraph$.

We now show that $\hfinitegraph$ has good expansion properties, at least
when we only consider its large subsets. Recall from
\eqref{eqn:expansion} that $\beta '$ is the lower bound
on the isoperimetric constants of $\finitegraph$.

\begin{lemma}
  \label{lem:reducedexpansion}
  For every $\delta >0 $, there exist  $K_0 = K_0(\delta )$ and
  $L_0 = L_0(\delta )$ such that for every
  $K<K_0$ and $L<L_0$
  \begin{equation}
    \label{eqn:reducedexpansion}
    P\Big( \inf_{A\subset \mathcal V( \hfinitegraph): \delta  N_n \le \abs
        A\le  N_n/2}
      \frac{\abs {\partial_\hfinitegraph A}}{\abs A} \ge \frac{\beta '}2\Big) \ge 1- N^{-\varepsilon }
  \end{equation}
  with $\varepsilon>0 $ independent of $\delta $.
\end{lemma}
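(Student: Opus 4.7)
The plan is to reduce \eqref{eqn:reducedexpansion} to a quantitative bound on the size of the ``bad'' set $B \coloneqq \vfinitegraph \setminus \hvfinitegraph$. Once I show that $|B| \le (\beta'/2)\delta N_n$ with probability at least $1 - N_n^{-\varepsilon}$, the lemma follows easily: for any $A \subset \hvfinitegraph$ the identity $\partial_\hfinitegraph A = \partial_\finitegraph A \cap \hvfinitegraph$ gives $|\partial_\hfinitegraph A| \ge |\partial_\finitegraph A| - |B|$, and combining with \eqref{eqn:expansion} yields $|\partial_\hfinitegraph A|/|A| \ge \beta' - |B|/|A| \ge \beta'/2$ whenever $\delta N_n \le |A| \le N_n/2$.

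I will decompose $B = B_1 \cup B_2$, where $B_1 \coloneqq \{x : \bar Z_0^2(x) < L\}$ and $B_2 \coloneqq \{x : \Psi_\finitegraph^1(x) < K\}$, and control each piece separately. The set $B_1$ is determined by i.i.d.\ Bernoulli variables with parameter $P(\bar Z_0^2(x) < L) \to 0$ as $L \to -\infty$, so choosing $L_0 = L_0(\delta)$ small enough makes $E|B_1| \le (\beta'/8)\delta N_n$, and Hoeffding's inequality then yields $|B_1| \le (\beta'/4)\delta N_n$ with probability $1 - e^{-cN_n}$. For $B_2$, the marginals $\Psi_\finitegraph^1(x)$ are centred Gaussian with variance $\Var(\Psi_\finitegraph^1(x)) = G_\finitegraph(x,x) - (t^2/2)(1-1/N_n)$, uniformly bounded above by \eqref{eqn:greenfunctionest} and below by a positive constant (for $t$ small, using that $G_\finitegraph(x,x)$ is uniformly bounded below by the first few contributions in \eqref{def:zmGreen}); hence $P(\Psi_\finitegraph^1(x) < K) \le q(K) \to 0$ uniformly as $K \to -\infty$, and choosing $K_0 = K_0(\delta)$ small makes $E|B_2| \le (\beta'/16)\delta N_n$.

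The main technical step is controlling $\Var|B_2|$, since the events $\{\Psi_\finitegraph^1(x) < K\}$ are correlated. My plan is to invoke the Hermite expansion of $\bbone_{(-\infty,K)}$ in the Gaussian space of the pair $(\Psi_\finitegraph^1(x), \Psi_\finitegraph^1(y))$, which (after Cauchy--Schwarz on the Hermite coefficients) gives $|\Cov(\bbone_{X<K}, \bbone_{Y<K})| \le |\rho(X,Y)|$ for any standard Gaussian pair with correlation $\rho$. Combined with the lower bound on $\Var(\Psi_\finitegraph^1)$, this upgrades to $|\Cov(\bbone_{\Psi_\finitegraph^1(x) < K}, \bbone_{\Psi_\finitegraph^1(y) < K})| \le C|\Cov(\Psi_\finitegraph^1(x), \Psi_\finitegraph^1(y))|$. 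Since $|\Cov(\Psi_\finitegraph^2(x), \Psi_\finitegraph^2(y))| = O(t^2/N_n)$ for $x \neq y$, one further application of \eqref{eqn:greenfunctionest} together with the ball-volume estimate $|S_\finitegraph(x,r)| \le d(d-1)^{r-1}$ yields
\begin{equation*}
  \Var|B_2| \le C \sum_{x, y \in \vfinitegraph} \bigl((d-1)^{-d_\finitegraph(x,y)} + N_n^{-\varepsilon_0}\bigr) = O(N_n^{2-\varepsilon_0}).
\end{equation*}
Chebyshev's inequality then gives $P(|B_2| > (\beta'/4)\delta N_n) \le C(\delta) N_n^{-\varepsilon_0}$, and absorbing the $\delta$-dependent prefactor into a slightly smaller exponent produces the claim with some $\varepsilon \in (0, \varepsilon_0)$ independent of $\delta$, for $n$ sufficiently large.
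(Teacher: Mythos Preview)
Your proof is correct and follows essentially the same route as the paper: reduce \eqref{eqn:reducedexpansion} to the bound $|B_1|+|B_2|\le \frac{\beta'}{2}\delta N_n$, control $|B_1|$ by a binomial large-deviation estimate, and control $|B_2|$ by a first/second moment argument with Chebyshev. The only difference is that the paper bounds $\Cov(\bbone_{\Psi^1_\finitegraph(x)<K},\bbone_{\Psi^1_\finitegraph(y)<K})$ via the normal comparison lemma of Leadbetter--Lindgren--Rootz\'en, whereas you obtain the same inequality $|\Cov|\le C|\Cov(\Psi^1_\finitegraph(x),\Psi^1_\finitegraph(y))|$ through the Hermite expansion and Cauchy--Schwarz; both are standard and lead to the identical estimate $\Var|B_2|\le CN_n^{2-\varepsilon_0}$.
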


\begin{proof}
  We show that for $K,L$ sufficiently negative,
  $B_1(n) = \{x\in \vfinitegraph: \bar Z_0^2(x) < L\}$ and
  $B_2(n) = \{x\in \vfinitegraph: \Psi^1_\finitegraph(x) < K\}$ satisfy
  \begin{equation}
    \label{eqn:expaa}
    P\big(\abs{B_1(n)}+ \abs{B_2(n)} \le \beta '\delta N_n/2\big)\ge
    1-N^{-\varepsilon }.
  \end{equation}
  The claim of the lemma then follows from
  \eqref{eqn:expansion}. Indeed, on the event in \eqref{eqn:expaa}, for $A$ as in
  \eqref{eqn:reducedexpansion},
  \begin{equation}
    \abs{\partial_\hfinitegraph A } \ge  \abs{\partial_\finitegraph A} -
    (\abs{B_1(n)}+\abs{B_2(n)}) \ge \beta ' \abs A - \beta ' \delta N_n /2
    \ge \beta ' \abs A /2.
  \end{equation}

  To prove \eqref{eqn:expaa}, observe first that $\abs{B_1(n)}$ is a
  binomial random variable with parameters~$N_n$ and
  $p=P(\bar Z^2_0(x) < L)$. Hence, by taking $L_0$ depending on $\delta $ sufficiently small,
  we obtain by the
  standard large deviation estimates that
  $P(\abs{B_1(n)}\ge \beta'\delta N_n/4) \le e^{-c N_n}$ for all $L<L_0$.

  For $B_2(n)$, we use the second moment method again. Observe first that by
  \eqref{eqn:Psionetwo},
  \begin{equation}
    \begin{split}
      \Cov& (\Psi_\finitegraph^1(x), \Psi_\finitegraph^1(y)) =
      \Cov (\Psi_\finitegraph(x), \Psi_\finitegraph(y))
      -\Cov (\Psi^2_\finitegraph(x), \Psi^2_\finitegraph(y))
      \\&
      \refoverseteq{eqn:generalzerogff} G_\finitegraph(x,y) - t_n^2 \Cov (\xi_0^2(x), \xi_0^2(y))
      \refoverseteq{eqn:covxi}   G_\finitegraph(x,y) - \tfrac {t_n^2}2(\delta_{x,y} +
        \tfrac 1 {N_n}).
    \end{split}
  \end{equation}
  In particular, using the estimate \eqref{eqn:greenfunctionest} on
  $G_\finitegraph$, since $t_n\to0$,
  $\sigma^2_x  := \Var(\Psi_\finitegraph^1(x)) = G_{\finitegraph}(x,x) + O(t_n^2)\in (c,c')$
  for some
  $0<c<c'<\infty$, and if  $x\neq y$,
  for some $\varepsilon \in (0,1)$,
  \begin{equation}
    \label{eqn:covpsionebound}
    \Cov (\Psi_\finitegraph^1(x), \Psi_\finitegraph^1(y))
    \le C ({d-1})^{-d_\finitegraph(x,y)} + N_n^{-\varepsilon }.
  \end{equation}
  As consequence, we can fix $K_0$ small enough so that
  \begin{equation}
    \label{eqn:btwoexp}
    E(\abs {B_2(n)})
    = \sum_{x\in \vfinitegraph} P(\Psi^1_\finitegraph(x) < K)
    \le \sum_{x\in \vfinitegraph} e^{-K^2/(2c)}
    \le \beta ' \delta N_n  /8
  \end{equation}
  for every $K\le K_0$.
  By the normal comparison lemma, see
  e.g.~\cite[Theorem~4.2.1]{LLR83}, for $x\neq y \in \vfinitegraph$, we
  then obtain
  \begin{equation}
    \begin{split}
      &P(\Psi_\finitegraph^1(x) \le K , \Psi_\finitegraph^1(y) \le K  )
      -P(\Psi_\finitegraph^1(x) \le K) P(\Psi_\finitegraph^1(y) \le K  )
      \\&\quad\le C (\Cov(\Psi_\finitegraph^1(x)/\sigma_x
          ,\Psi_\finitegraph^1(y)/\sigma_y)\vee 0)
      \le C (\Cov(\Psi_\finitegraph^1(x)
          ,\Psi_\finitegraph^1(y))\vee 0).
    \end{split}
  \end{equation}
  As consequence, using also the fact that diameter of $\finitegraph$ is
  smaller than $C\log N_n$ (see e.g.~\cite[Proposition~3.1.5]{Kow19}) and
  that  $\abs{S_\finitegraph(x,r)}\le d (d-1)^{r-1}$, we obtain
  \begin{equation}
    \begin{split}
      \label{eqn:btwovar}
      \Var &(\abs{B_2(n)}) \\&= \sum_{x,y\in \vfinitegraph }
      P(\Psi^1_\finitegraph(x)\le K, \Psi^1_\finitegraph(y)\le K)
      -
      P(\Psi^1_\finitegraph(x)\le K)P( \Psi^1_\finitegraph(y)\le K)
      \\&\le
      N_n + C\sum_{x\in \vfinitegraph} \sum_{r=1}^{C\log N_n}
      \sum_{y\in \vfinitegraph: d_\finitegraph(x,y)=r}
      (\Cov(\Psi_\finitegraph^1(x) ,\Psi_\finitegraph^1(y))\vee 0)
      \\&\refoversetleq{eqn:covpsionebound}
       N_n + C\sum_{x\in \vfinitegraph} \sum_{r=1}^{C\log N_n}
      \sum_{y\in \vfinitegraph: d_\finitegraph(x,y)=r}
      ((d-1)^{-r} + N_n^{-\varepsilon })
      \le N_n^{2-\varepsilon }.
    \end{split}
  \end{equation}
  By Chebyshev inequality, using \eqref{eqn:btwoexp}, \eqref{eqn:btwovar},
  $P(\abs{B_2(n)}\ge \beta '\delta N_n/4)\le N_n^{-\varepsilon }$.

  Combining the conclusions of the last two paragraphs then implies
  \eqref{eqn:expaa} and completes the proof.
\end{proof}

\section{Sprinkling / Proof of Theorem~\ref{thm:main}} 
\label{sec:sprinkling}

With all preparations of the previous sections, the sprinkling
construction is relatively straightforward and follows the steps of
\cite{ABS04}.

We start by showing that the field $\bar \Psi_\finitegraph$
defined under \eqref{eqn:barPsi} contains a giant component of size at
least $\eta(h)(1-\delta )N_n$, with probability tending to one as
$n\to \infty$. Since $\bar \Psi_\finitegraph$ is a zero-average Gaussian
free field on $\finitegraph$, this will imply the lower bound on
$\abs { \mathcal C_\Max^{\finitegraph,h}}$ for Theorem~\ref{thm:main}.

For $h<h_\star$ and $\delta \in (0,1/8)$ as in the statement of
Theorem~\ref{thm:main}, we fix an arbitrary $h'\in (h,h_\star)$ and set
$\varepsilon \coloneqq h'-h$. We further fix $K$, $L$ small and $p$ close
to $1$ so that $L$, $p$ are linked by \eqref{eqn:L} and
\begin{equation}
  \label{eqn:fixing}
  \begin{aligned}
    &K=h\wedge K_0(\delta \eta (h')/2), \\
    &(h',p,-\infty)\in \mathcal S^0,
  \end{aligned}
  \qquad
  \begin{aligned}
    &L<L_0(\delta \eta(h')/2),
    \\&\eta(h',p) > \eta (h')/2,
  \end{aligned}
\end{equation}
where $K_0(\delta \eta(h')/2)$, $L_0(\delta \eta(h')/2)$ are as in
Lemma~\ref{lem:reducedexpansion},
and the last inequality in \eqref{eqn:fixing} can be satisfied by
Proposition~\ref{pro:robust}(c). We let $t_n\to 0$ slowly so that
\begin{equation}
  \label{eqn:tnfix}
  P(\bar Z^2_0(x) \ge t_n^{-1}(h+ 1 -K) )\ge N_n^{-c_{h'}\beta' \delta
    \eta(h',p)/8},
\end{equation}
where $c_{h'}$ is as in Proposition~\ref{pro:mesoscopic}, and $\beta '$ as
in Lemma~\ref{lem:reducedexpansion}.

Due to Lemma~\ref{lem:reducedexpansion}, using also \eqref{eqn:fixing}, we know that:
\begin{equation}
  \label{eqn:Aone}
  \text{For }
   \mathcal A^1_n
    \coloneqq \bigg\{\inf_{\substack{A\subset V(\hfinitegraph): \\ \delta \eta
          (h')\frac{N_n}2 < |A| < \frac{N_n}2}}
      \frac{
        \abs{\partial_{\hfinitegraph}A}}{\abs A}
      \ge \frac {\beta'}2 \bigg\} \text{ we have } P(\mathcal A^1_n) \ge 1- N_n^{-c}.
\end{equation}
By Gaussian tail estimates, the zero-averaging term in the definition
\eqref{eqn:barPsi} of
$\bar \Psi_\finitegraph^2$ is negligible with high probability:
\begin{equation}
  \label{eqn:zeroaveragepert}
  \text{For }\mathcal A^2_n \coloneqq \Big\{
  \abs[\Big]{N_n^{-1}\sum_{y\in \vfinitegraph} \bar Z^2_0(y)}\le
  \varepsilon \Big\}\text{ we have }
  P( \mathcal A^2_n) \ge 1 - e^{-c N_n}.
\end{equation}
Introducing $m_n:=N_n^{c_{h'}}$ to denote the minimal
size of mesoscopic components and writing
$a_k= (1- k \delta )\eta (h',p)$ for $k\in \{1,2\}$, Proposition~\ref{pro:mesoscopic} implies
that:
\begin{equation}
  \label{eqn:mesoscopichprime}
  \text{For } \mathcal A_n^3 \coloneqq \Big\{\sum_{x\in \vfinitegraph}
      \bbone_{\{\abs {\mathcal C_x^{h'}(t_n)}\ge m_n\}}
      \ge a_1 N_n \Big\} \text{ we have }
    \lim_{n\to\infty} P(\mathcal A_n^3) = 1,
\end{equation}
that is $E^{\ge h'}({\Psi^1_\finitegraph}) \cap \hvfinitegraph$
has many mesoscopic components, with high probability.
Finally, since $\Psi_\finitegraph^1$ is independent of
$\bar Z^2_0$ and the graph $\hfinitegraph$ depends on $\bar Z^2_0$ only
via
$\bbone_{[L,\infty)}(\bar Z^2_0(x))$, it follows that:
\begin{equation}
  \label{eqn:condZbar}
  \parbox{13cm}{Conditionally on
    $\sigma (\Psi_\finitegraph^1,\hfinitegraph)$, the random variables
    $(\bar Z_0^2(x))_{x\in \hvfinitegraph}$ are i.i.d.~distributed as
    $\mathcal N(0,1/2)$ random variable conditioned on being larger than
    $L$.}
\end{equation}
In particular, since $L<0$,
\begin{equation}
  \label{eqn:pn}
  \begin{split}
    p_n &\coloneqq P(\bar Z_0^2(x) \ge t_n^{-1}(h + t_n \varepsilon -K) \mid
      \sigma (\Psi_\finitegraph^1,\hfinitegraph), \{x\in \hvfinitegraph\})
    \\&\refoversetgeq{eqn:tnfix} N_n^{-c_{h'}\beta' \delta
      \eta(h',p)/8}.
  \end{split}
\end{equation}

Assume now that
$\mathcal A_n \coloneqq \mathcal A_n^1 \cap \mathcal A_n^2 \cap \mathcal A_n^3$
occurs. On $\mathcal A_n^3$, we can fix a set of at most $a_1 N_n/m_n$
mesoscopic components of
$E^{\ge h'}(\Psi^1_\finitegraph)\cap \hvfinitegraph$ that together
contain at least $a_1 N_n$ vertices. Any $x$ in those components
satisfies $\bar Z^2_0(x) \ge L$ (by definition of~$\hfinitegraph$) and
thus (on $\mathcal A^2_n$),
\begin{equation}
  \bar \Psi_\finitegraph(x) = \Psi_\finitegraph^1(x) + t_n\Big(\bar Z^2_0(x) -
    N_n^{-1} \sum_{y\in \vfinitegraph} \bar Z^2_0(y)\Big) \ge
  h'+t_n(L-\varepsilon ) \ge h
\end{equation}
for all $n$ large enough. It follows that these fixed mesoscopic
components are contained in $E^{\ge h}(\bar \Psi_{\finitegraph})$. If
$E^{\ge h}(\bar\Psi_\finitegraph)$ has no component of size at least
$a_2 N_n$, then one can split these fixed components into two groups $A$,
$B$, each having at least $\delta \eta( h',p)N_n$ vertices, which are not
connected within $E^{\ge h}(\bar \Psi_\finitegraph)$. There are at most
$2^{a_1N_n/m_n}$ ways to split the fixed mesoscopic components into two
groups. By \eqref{eqn:fixing},
$\delta \eta (h',p) N_n > \delta \eta (h')N_n/2$. Therefore, on
$\mathcal A_n^1$, we can use Menger's theorem to show that there are at
least $\beta '\delta N_{n}\eta (h',p)/2$ pairwise vertex-disjoint paths
from $A$ to $B$ in $\hfinitegraph$. Since $\hfinitegraph$ has at most
$N_n$ vertices, at last half of those paths are of length at most
$4/\beta ' \delta \eta (h',p)$ each. For every $x\in \hvfinitegraph$,
$\Psi_\finitegraph^1(x) \ge K$. Therefore, if
$\bar Z^2_0(x)\ge t_n^{-1}(h + t_n \varepsilon -K)$ and $\mathcal A^2_n$
occurs, then
\begin{equation}
  \bar \Psi_\finitegraph(x) = \Psi_\finitegraph^1(x) + t_n\Big(\bar
    Z_0^2(x) - \frac 1 {N_n}\sum_{x\in \vfinitegraph} \bar
    Z_0^2(x)\Big)\ge h.
\end{equation}
Hence for the groups $A$ and $B$ being disconnected in
$\hfinitegraph \cap E^{\ge h}(\bar \Psi_\finitegraph)$, there must be
at least one vertex with $\bar Z^2_0(x) < t_n^{-1}(h + t_n \varepsilon -K)$ on
every of these paths. Due to \eqref{eqn:condZbar} and \eqref{eqn:pn},
this has probability at most
\begin{equation}
  (1-p_n^{4/\beta ' \delta \eta (h',p)})^{\beta ' \delta N_n \eta (h',p)/4} \le
  \exp(- c(\delta ,h',p) N_n^{1-c_{h'}/2}).
\end{equation}
It follows that the probability that $\mathcal A_n$ occurs and there is
no connected component of $E^{\ge h}(\bar \Psi_\finitegraph)$ of size
at least $a_2 N_n$ (that is there is some partition of the fixed mesoscopic
  components into groups $A$ and $B$ as above that are disconnected form
  each other in $E^{\ge h}(\bar \Psi_\finitegraph)$) is at most
\begin{equation}
   2^{a_1N_n/m_n}
  \exp(- c(\delta ,h',p) N_n^{1-c_{h'}/2})\le \exp\{-c'
    N_n^{1-c_{h'}/2}\},
\end{equation}
which converges to $0$ as $n\to \infty$.

Together with \eqref{eqn:Aone}--\eqref{eqn:mesoscopichprime}, this
implies that with probability tending to one with $n$,
$E^{\ge h}(\bar \Psi_\finitegraph)$ has a connected component of size at
least $a_2 N_n = (1-2\delta )\eta (h',p) N_n$. Taking $h'$ close to $h$,
$p$ close to $1$, using the continuity of $\eta(h,p)$ from
Proposition~\ref{pro:robust}(c), and recalling that
$\bar\Psi_\finitegraph$ has the same distribution as $\Psi_\finitegraph$
then proves the lower bound on $\abs{\mathcal C_\Max^{\finitegraph,h}}$
for our main result~\eqref{eqn:main} of Theorem~\ref{thm:main}.

The upper bounds on $\abs{\mathcal C_\Max^{\finitegraph,h}}$ and
$\abs{\mathcal C_\Sec^{\finitegraph,h}}$ in \eqref{eqn:main} then follow
from Lemma~\ref{lem:smallcomponents} and the lower bound on
$\abs{\mathcal C_\Max^{\finitegraph,h}}$.  This completes the proof of
Theorem~\ref{thm:main}.

\begin{remark}
  \label{rem:assumptions}
  We conclude this paper with a short discussion of the assumptions of
  Theorem~\ref{thm:main}. Assumption~\ref{assumptions}(a) is
  clearly necessary in all our considerations (besides
    Section~\ref{sec:decomposition}).

  Assumption~\ref{assumptions}(c), that is the assumption on the spectral
  gap, is only used to imply the uniform isoperimetric inequality
  \eqref{eqn:expansion}, and also in \eqref{eqn:mchconv}. For our results
  to be true, we only need \eqref{eqn:expansion} to hold for macroscopic
  sets (cf.~proof of Lemma~\ref{lem:reducedexpansion}). Also, the argument
  around \eqref{eqn:mchconv} can be easily adapted if
  $\lambda_\finitegraph\to 0$ sufficiently slowly.

  Assumption~\ref{assumptions}(b) is only used very implicitly in this
  paper, namely to ensure that a majority of vertices of $\finitegraph$
  are $r_n$-treelike with $r_n = c_1 \log N_n$, cf.~\eqref{eqn:treelikes}
  which is proved in \cite[(5.6)]{AC20b} using \cite[Lemma~6.1]{CTW11}.
  In Sections~\ref{sec:mesoscopic}--\ref{sec:sprinkling} of this paper,
  we even do not need that $r_n$ grows so quickly. $r_n = C \log \log N_n$
  for $C$ sufficiently large would be sufficient for our purposes. Hence
  Assumption~\ref{assumptions}(b) can be replaced by: For some $C$
  sufficiently large,
  \begin{equation}
    \abs{\{x\in \vfinitegraph: x \text{ is
          $(C \log \log N_n)$-treelike}\}}\ge N_n(1-o(1)).
  \end{equation}
  For the existence of the giant component (not necessary of size
     ${(1-\delta )\eta (h) N_n}$), the factor $(1-o(1))$ in the last
  inequality could even be replaced by a $c\in (0,1)$.
\end{remark}

\ack

The author wish to thank P.-F.~Rodriguez for useful discussions.

\frenchspacing


\begin{thebibliography}{10}
\providecommand{\arXiv}[1]{{{\href{https://arxiv.org/abs/#1}{\emph{arXiv:#1}}}}}
\providecommand{\url}[1]{\texttt{#1}}
\providecommand{\urlprefix}{Available at: }
\providecommand{\eprint}[2][]{\url{#2}}

\bibitem{Aba19}
Ab{\"a}cherli, A. Local picture and level-set percolation of the {G}aussian
  free field on a large discrete torus. \emph{Stochastic Process. Appl.}
  \textbf{129} (2019), no.~9, 3527--3546.
\urlprefix\url{https://doi.org/10.1016/j.spa.2018.09.017}

\bibitem{AS18}
Ab{\"a}cherli, A.; Sznitman, A.-S. Level-set percolation for the {G}aussian
  free field on a transient tree. \emph{Ann. Inst. H. Poincaré Probab.
  Statist.} \textbf{54} (2018), no.~1, 173--201.
\urlprefix\url{https://doi.org/10.1214/16-AIHP799}

\bibitem{AC20a}
{Ab{\"a}cherli}, A.; {{\v{C}}ern{\'y}}, J. Level-set percolation of the
  {G}aussian free field on regular graphs {I}: Regular trees. \emph{Electron.
  J. Probab.} \textbf{25} (2020), Paper No. 65, 24.
\urlprefix\url{https://doi.org/10.1214/20-ejp468}

\bibitem{AC20b}
Ab\"{a}cherli, A.; \v{C}ern\'{y}, J. Level-set percolation of the {G}aussian
  free field on regular graphs {II}: finite expanders. \emph{Electron. J.
  Probab.} \textbf{25} (2020), Paper No. 130, 39.
\urlprefix\url{https://doi.org/10.1214/20-ejp532}

\bibitem{ABS04}
Alon, N.; Benjamini, I.; Stacey, A. Percolation on finite graphs and
  isoperimetric inequalities. \emph{Ann. Probab.} \textbf{32} (2004), no.~3A,
  1727--1745.
\urlprefix\url{https://doi.org/10.1214/009117904000000414}

\bibitem{AL15}
Anantharaman, N.; L{e M}asson, E. Quantum ergodicity on large regular graphs.
  \emph{Duke Math. J.} \textbf{164} (2015), no.~4, 723--765.
\urlprefix\url{https://doi.org/10.1215/00127094-2881592}

\bibitem{BLM87}
Bricmont, J.; Lebowitz, J.~L.; Maes, C. Percolation in strongly correlated
  systems: the massless {G}aussian field. \emph{J. Statist. Phys.} \textbf{48}
  (1987), no. 5-6, 1249--1268.
\urlprefix\url{https://doi.org/10.1007/BF01009544}

\bibitem{CTW11}
{\v{C}}ern{\'y}, J.; Teixeira, A.; Windisch, D. Giant vacant component left by
  a random walk in a random {$d$}-regular graph. \emph{Ann. Inst. Henri
  Poincar\'e Probab. Stat.} \textbf{47} (2011), no.~4, 929--968.
\urlprefix\url{http://dx.doi.org/10.1214/10-AIHP407}

\bibitem{CN20}
Chiarini, A.; Nitzschner, M. Entropic repulsion for the {G}aussian free field
  conditioned on disconnection by level-sets. \emph{Probab. Theory Related
  Fields} \textbf{177} (2020), no. 1-2, 525--575.
\urlprefix\url{https://doi.org/10.1007/s00440-019-00957-7}

\bibitem{Con21}
Con{chon-Kerjan}, G.: Anatomy of a {G}aussian giant: supercritical level-sets
  of the free field on random regular graphs, Preprint, available at:
  \arXiv{2102.10975}. 2021.

\bibitem{CF13}
Cooper, C.; Frieze, A. Component structure of the vacant set induced by a
  random walk on a random graph. \emph{Random Structures Algorithms}
  \textbf{42} (2013), no.~2, 135--158.
\urlprefix\url{https://doi.org/10.1002/rsa.20402}

\bibitem{DPR18}
Drewitz, A.; Pr\'{e}vost, A.; Rodriguez, P.-F. The sign clusters of the
  massless {G}aussian free field percolate on $\mathbb{Z}^d$, $d\geq3$ (and
  more). \emph{Comm. Math. Phys.} \textbf{362} (2018), no.~2, 513--546.
\urlprefix\url{https://doi.org/10.1007/s00220-018-3209-6}

\bibitem{DPR21}
Drewitz, A.; Prévost, A.; Rodriguez, P.-F.: Critical exponents for a
  percolation model on transient graphs, Preprint, available at:
  \arXiv{2101.05801}. 2021.

\bibitem{DR15}
Drewitz, A.; Rodriguez, P.-F. High-dimensional asymptotics for percolation of
  {G}aussian free field level sets. \emph{Electron. J. Probab.} \textbf{20}
  (2015), 1--39.
\urlprefix\url{https://doi.org/10.1214/EJP.v20-3416}

\bibitem{DGRS20}
Dumini{l-C}opin, H.; Goswami, S.; Rodriguez, P.-F.; Severo, F.: Equality of
  critical parameters for percolation of {G}aussian free field level-sets,
  Preprint, available at: \arXiv{2002.07735}. 2020.

\bibitem{ES10}
Elon, Y.; Smilansky, U. Percolating level sets of the adjacency eigenvectors of
  $d$-regular graphs. \emph{Journal of Physics A: Mathematical and Theoretical}
  \textbf{43} (2010), no.~45, 455\,209.
\urlprefix\url{https://doi.org/10.1088/1751-8113/43/45/455209}

\bibitem{Kow19}
Kowalski, E. \emph{An introduction to expander graphs}, \emph{Cours
  Sp\'{e}cialis\'{e}s [Specialized Courses]}, vol.~26, Soci\'{e}t\'{e}
  Math\'{e}matique de France, Paris, 2019.

\bibitem{KLS20}
Krivelevich, M.; Lubetzky, E.; Sudakov, B. Asymptotics in percolation on
  high-girth expanders. \emph{Random Structures Algorithms} \textbf{56} (2020),
  no.~4, 927--947.
\urlprefix\url{https://doi.org/10.1002/rsa.20903}

\bibitem{Le-Gal16}
Le~Gall, J.-F. \emph{Brownian motion, martingales, and stochastic calculus},
  \emph{Graduate Texts in Mathematics}, vol. 274, Springer [Cham], 2016.
\urlprefix\url{https://doi.org/10.1007/978-3-319-31089-3}

\bibitem{LLR83}
Leadbetter, M.~R.; Lindgren, G.; Rootz{\'e}n, H. \emph{Extremes and related
  properties of random sequences and processes}, Springer Series in Statistics,
  Springer-Verlag, New York, 1983.

\bibitem{LS86}
Lebowitz, J.~L.; Saleur, H. Percolation in strongly correlated systems.
  \emph{Phys. A} \textbf{138} (1986), no. 1-2, 194--205.
\urlprefix\url{https://doi.org/10.1016/0378-4371(86)90180-9}

\bibitem{LPW09}
Levin, D.~A.; Peres, Y.; Wilmer, E.~L. \emph{Markov chains and mixing times},
  American Mathematical Society, Providence, RI, 2009. With a chapter by James
  G. Propp and David B. Wilson.

\bibitem{LSS97}
Liggett, T.~M.; Schonmann, R.~H.; Stacey, A.~M. Domination by product measures.
  \emph{Ann. Probab.} \textbf{25} (1997), no.~1, 71--95.
\urlprefix\url{http://doi.org/10.1214/aop/1024404279}

\bibitem{MS83}
Molchanov, S.~A.; Stepanov, A.~K. Percolation in random fields. {I}.
  \emph{Teoret. Mat. Fiz.} \textbf{55} (1983), no.~2, 246--256.

\bibitem{MR98}
Molloy, M.; Reed, B. The size of the giant component of a random graph with a
  given degree sequence. \emph{Combin. Probab. Comput.} \textbf{7} (1998),
  no.~3, 295--305.
\urlprefix\url{https://doi.org/10.1017/S0963548398003526}

\bibitem{RS80}
Reed, M.; Simon, B. \emph{Methods of modern mathematical physics. {I}},
  Academic Press, Inc., New York, 1980, 2nd ed.

\bibitem{RS13}
Rodriguez, P.-F.; Sznitman, A.-S. Phase transition and level-set percolation
  for the {G}aussian free field. \emph{Comm. Math. Phys.} \textbf{320} (2013),
  no.~2, 571--601.
\urlprefix\url{https://doi.org/10.1007/s00220-012-1649-y}

\bibitem{Szn16}
Sznitman, A.-S. Coupling and an application to level-set percolation of the
  {G}aussian free field. \emph{Electron. J. Probab.} \textbf{21} (2016), 1--26.
\urlprefix\url{https://doi.org/10.1214/16-EJP4563}

\bibitem{Szn19a}
Sznitman, A.-S. On macroscopic holes in some supercritical strongly dependent
  percolation models. \emph{Ann. Probab.} \textbf{47} (2019), no.~4,
  2459--2493.
\urlprefix\url{https://doi.org/10.1214/18-AOP1312}

\end{thebibliography}
\end{document}